\documentclass[11pt]{amsart}

\usepackage[utf8]{inputenc}
\usepackage[english]{babel}
\usepackage{amsmath,amsfonts,amssymb}
\usepackage{mathtools}
\usepackage{mathrsfs}
\usepackage[colorlinks=true]{hyperref}
\usepackage{csquotes}
\usepackage{paralist}


\usepackage{tikz}
\usetikzlibrary{cd,arrows}
\usepackage{pgfplots}
\pgfplotsset{width=12cm,compat=1.9}


\usepackage[backend=biber,style=alphabetic,doi=false,isbn=false,url=true,datamodel=eprint-hal]{biblatex}
\addbibresource{monjac.bib}


\DeclareFieldFormat{hal}{%
  HAL\addcolon\space
  \ifhyperref
    {\href{https://hal.archives-ouvertes.fr/#1}{\nolinkurl{#1}}}
    {\nolinkurl{#1}}}
    
\DeclareFieldAlias{eprint:hal}{hal}
\DeclareFieldAlias{eprint:HAL}{eprint:hal}

\renewbibmacro*{eprint}{%
  \printfield{hal}%
  \newunit\newblock
  \iffieldundef{eprinttype}
    {\printfield{eprint}}
    {\printfield[eprint:\strfield{eprinttype}]{eprint}}}


\theoremstyle{plain}
\newtheorem{thm}{Theorem}[section]
\newtheorem{prp}[thm]{Proposition}
\newtheorem{cor}[thm]{Corollary}
\newtheorem{lem}[thm]{Lemma}

\theoremstyle{definition}
\newtheorem{dfn}[thm]{Definition}

\theoremstyle{remark}
\newtheorem{rmk}[thm]{Remark}
\newtheorem{exa}[thm]{Example}

\newtheorem{prb}[thm]{Problem}


\newcommand{\CC}{\mathbb{C}}
\newcommand{\NN}{\mathbb{N}}
\newcommand{\QQ}{\mathbb{Q}}
\newcommand{\RR}{\mathbb{R}}
\newcommand{\ZZ}{\mathbb{Z}}

\renewcommand{\O}{\mathcal{O}}
\newcommand{\I}{\mathcal{I}}
\newcommand{\J}{\mathcal{J}}
\newcommand{\X}{\mathcal{X}}

\newcommand{\M}{\mathsf{M}}

\newcommand{\mm}{\mathfrak{m}}

\newcommand{\be}{\mathbf{e}}
\newcommand{\bt}{\mathbf{t}}
\newcommand{\bw}{\mathbf{w}}
\newcommand{\bx}{\mathbf{x}}

\newcommand{\Cx}{\CC\{\bx\}}



\DeclarePairedDelimiter\set\{\}
\DeclarePairedDelimiter\abs\lvert\rvert 
\DeclarePairedDelimiter\norm\lVert\rVert
\DeclarePairedDelimiter\ideal\langle\rangle


\DeclareMathOperator{\ann}{ann}
\DeclareMathOperator{\Aut}{Aut}
\DeclareMathOperator{\Der}{Der}
\DeclareMathOperator{\edim}{edim}
\DeclareMathOperator{\Iso}{Iso}
\DeclareMathOperator{\Min}{Min}
\DeclareMathOperator{\red}{red}
\DeclareMathOperator{\rk}{rk}
\DeclareMathOperator{\Sing}{Sing}
\DeclareMathOperator{\supp}{supp}
\DeclareMathOperator{\syz}{syz}


\title[Monomial Jacobian ideals]{Hypersurface singularities with\\ monomial Jacobian ideal}

\author[R.~Epure]{Raul Epure}
\address{ \linebreak
Raul Epure\\
Department of Mathematics, TU Kaiserslautern\\
67663 Kaiserslautern\\
Germany
}
\email{\href{epure@mathematik.uni-kl.de}{epure@mathematik.uni-kl.de}}

\author[M.~Schulze]{Mathias Schulze}
\address{ \linebreak
Mathias Schulze\\
Department of Mathematics, TU Kaiserslautern\\
67663 Kaiserslautern\\
Germany
}
\email{\href{mschulze@mathematik.uni-kl.de}{mschulze@mathematik.uni-kl.de}}


\subjclass[2010]{Primary 32S25; Secondary 05B35}

\keywords{Hypersurface singularity, Jacobian ideal, monomial ideal, Mather--Yau Theorem, Thom--Sebastiani polynomial, free divisor, transversal matroid}

\thanks{}

\numberwithin{equation}{section}

\begin{document}

\begin{abstract}
We show that every convergent power series with monomial extended Jacobian ideal is right equivalent to a Thom--Sebastiani polynomial.
This solves a problem posed by Hauser and Schicho.

On the combinatorial side, we introduce a notion of Jacobian semigroup ideal involving a transversal matroid.
For any such ideal we construct a defining Thom--Sebastiani polynomial.

On the analytic side, we show that power series with a quasihomogeneous extended Jacobian ideal are strongly Euler homogeneous.
Due to a Mather--Yau-type theorem, such power series are determined by their Jacobian ideal up to right equivalence.
\end{abstract}

\maketitle

\section{Introduction}\label{23}

Let $f\colon U\to\CC$ be a holomorphic function, defined in some open neighborhood $U\subseteq\CC^n$ of $\mathbf0\in\CC^n$.
By considering arbitrarily small $U$, $f$ can be considered as a convergent power series $f\in\Cx$ in variables $\bx=x_1,\dots,x_n$, which are local coordinates on $\CC^n$ at $\mathbf0$.
Similarly, a local biholomorphic map of $\CC^n$ at the origin can be seen as a 
$\CC$-algebra automorphism $\varphi\in\Aut_\CC\Cx$, or a local coordinate change of $\bx$.


The \emph{Jacobian ideal} and \emph{extended Jacobian ideal} of $f$ are defined by
\[
\J_f=\ideal{\frac{\partial f}{\partial x_1},\dots,\frac{\partial f}{\partial x_n}}\unlhd\Cx,\quad
\tilde\J_f=\ideal{\frac{\partial f}{\partial x_1},\dots,\frac{\partial f}{\partial x_n},f}\unlhd\Cx,
\]
respectively.


\begin{rmk}\label{32}
The ideals $\J_f$ and $\tilde\J_f$ are analytic invariants of $f\in\Cx$ and of the ideal $\ideal f\unlhd\Cx$, respectively.
This means that if $\varphi\in\Aut_\CC\Cx$ is a $\CC$-algebra automorphism and $u\in\Cx^*$ a unit power series, then 
\[
\varphi(\J_f)=\J_{\varphi(f)},\quad\tilde\J_{u\cdot f}=\tilde\J_f.
\]
In particular, $\varphi$ induces $\CC$-algebra isomorphisms
\[
\Cx/\J_f\to\Cx/\J_{\varphi(f)},\quad\Cx/\tilde\J_f\to\Cx/\tilde\J_{\varphi(f)}.
\]
However, $\J_{u\cdot f}\ne\J_f$, for instance, for $f=x^5+x^2y^2+x^5$ and $u=1+x$.
\end{rmk}


In the language of analytic geometry, the arbitrarily small neighborhoods $U\subseteq\CC^n$ of $\mathbf0\in\CC^n$ with functions $f\in\Cx$ on it form a \emph{smooth space germ} 
\[
Y=(\CC^n,\mathbf0),\quad\O_Y=\Cx\rhd\ideal\bx=\mm_Y.
\]
A \emph{subspace germ} $X=V(\I_{X/Y})\subseteq Y$ is the zero locus of an ideal $\I_{X/Y}\unlhd\O_Y$, equipped with the $\CC$-algebra $\O_X=\O_Y/\I_{X/Y}$.
If $\I_{X/Y}$ is a radical ideal, that is, $\O_X$ and $X$ are \emph{reduced}, this algebraic structure is redundant.
If $\I_{X/Y}=\ideal f$ is generated by a single power series $f\in\Cx$, then
\[
\emptyset\ne V(f)\iff\mathbf0\in V(f)\iff f\in\mm_Y,\quad V(f)\subsetneq Y\iff f\ne0.
\]
If both latter conditions hold true, then the subspace germ
\[
\mathbf0\in X=V(f)\subsetneq Y,\quad \O_X=\O_Y/\ideal f\rhd\mm_Y/\ideal{f}=\mm_X,
\]
is called a \emph{hypersurface singularity}.
This means that $X$ is of (pure) codimension $1$ in the smooth space germ $Y$.
The points in $Y$ where all partial derivatives of $f$ vanish are called \emph{critical points} of $f$, those in $X$, \emph{singular points} of $X$.
The corresponding subspace germ is the \emph{singular locus} of $X$,
\[
\Sing X=V(\tilde\J_f)\subseteq X,\quad\O_{\Sing X}=\O_Y/\tilde\J_f=\O_X/\O_X\J_f.
\]
If $\Sing X=\emptyset$ is empty, then $f$ is a coordinate and $X\cong(\CC^{n-1},\mathbf0)$ is smooth.
Otherwise, 
\[
\mathbf0\in\Sing X\iff 0\ne f\in\mm_Y^2.
\]
If $\Sing X=\set{\mathbf0}$ is a point, then $f$ has an \emph{isolated critical point} $\mathbf0$, and $X=V(f)$ is called an \emph{isolated hypersurface singularity}.


\begin{rmk}\label{39}
In more intrinsic terms, $n$ equals the \emph{embedding dimension}
\[
\edim X:=\dim_\CC(\mm_X/\mm_X^2)
\]
of the hypersurface singularity $X=V(f)$, unless $\Sing X=\emptyset$ where $\edim X=n-1$.
Furthermore, the \emph{Jacobian ideal} $\O_X\J_f\unlhd\O_X$ of $X$ defining $\Sing X$ is the Fitting ideal of order $\dim X$ of the $\O_X$-module $\Omega_X^1$ of differential $1$-forms on $X$, and $\tilde\J_f$ is its contraction to $\O_Y$.
\end{rmk}


Any isomorphism $X\cong X'=V(f')\subseteq Y$ lifts to an automorphism of $Y$, defined by some $\varphi\in\Aut_\CC\Cx$ with $\varphi(f)=u\cdot f'$ for some unit $u\in\Cx^*$.


\begin{dfn}\label{31}
Two power series $f,f'\in\Cx$ are called \emph{contact equivalent} if $\varphi(f)=u\cdot f'$ for some $\CC$-algebra automorphism $\varphi\in\Aut_\CC\Cx$ and some unit power series $u\in\Cx^*$.
They are called \emph{right equivalent} if $u=1$.
\end{dfn}


Due to Remark~\ref{32}, $X\cong X'$ then implies that $\Sing X\cong\Sing X'$.
The converse implication is a celebrated theorem of Mather and Yau in the case of isolated hypersurface singularities (see \cite{MY82}), and of Gaffney and Hauser for general hypersurface singularities (see \cite[Part~I]{GH85} and \cite[Thm.~2]{HM86}) under the following mild restriction (see \cite[\S3, Def.]{HM86}).


\begin{dfn}\label{40}
The \emph{isosingular locus} of the germ $X=(\X,x)$ of a space $\X$ at $x$ is the subspace germ 
\[
\Iso X=(\set{x'\in\X\mid (\X,x')\cong X},x)\subseteq X.
\]
Note that $\Iso X\subseteq\Iso\Sing X$ if $\Sing X\ne\emptyset$.
If $\Iso X\supseteq\Iso\Sing X$, then $X$ is called \emph{harmonic}, and \emph{dissonant} otherwise.
\end{dfn}


Due to Ephraim (see \cite[Thm.~0.2]{Eph78}), 
\begin{equation}\label{34}
\Iso X \cong(\CC^k,\mathbf0),\quad X\cong X'\times\Iso X,\quad\Sing X\cong\Sing X'\times\Iso X.
\end{equation}
If $k=0$, then we say that $\Iso X$ is \emph{trivial}.


\begin{thm}[Mather--Yau, Gaffney--Hauser]\label{5}
If $X$ and $X'$ are harmonic hypersurface singularities, then $X\cong X'$ if and only if $\dim X=\dim X'$ and $\Sing X\cong\Sing X'$.
\end{thm}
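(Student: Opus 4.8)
The plan is to prove the two implications separately, disposing of the forward one by the invariance results already recorded and concentrating on the converse, which I would first reduce to the case of a trivial isosingular locus before invoking the deep analytic input. For the \enquote{only if} direction, suppose $X\cong X'$. Then $\dim X=\dim X'$ is immediate, and by the discussion preceding Definition~\ref{31} the isomorphism lifts to some $\varphi\in\Aut_\CC\Cx$ with $\varphi(f)=u\cdot f'$ for a unit $u\in\Cx^*$. By Remark~\ref{32} this gives $\varphi(\tilde\J_f)=\tilde\J_{\varphi(f)}=\tilde\J_{u\cdot f'}=\tilde\J_{f'}$, so $\varphi$ induces a $\CC$-algebra isomorphism $\O_{\Sing X}=\Cx/\tilde\J_f\to\Cx/\tilde\J_{f'}=\O_{\Sing X'}$, that is, $\Sing X\cong\Sing X'$.

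For the converse, assume $\dim X=\dim X'$ and $\Sing X\cong\Sing X'$, and first I would strip off the isosingular factors. Applying Ephraim's decomposition~\eqref{34} to $X$ and to $X'$ yields $X\cong X_0\times\Iso X$ and $X'\cong X_0'\times\Iso X'$ with $X_0,X_0'$ of trivial isosingular locus and $\Iso X\cong(\CC^k,\mathbf0)$, $\Iso X'\cong(\CC^{k'},\mathbf0)$. Harmonicity gives $\Iso X=\Iso\Sing X$ and $\Iso X'=\Iso\Sing X'$, so from $\Sing X\cong\Sing X'$ I obtain $\Iso X\cong\Iso X'$ and hence $k=k'$. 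The third isomorphism in~\eqref{34} reads $\Sing X\cong\Sing X_0\times(\CC^k,\mathbf0)$; taking isosingular loci and using $\Iso\Sing X\cong\Iso X\cong(\CC^k,\mathbf0)$ shows that $\Sing X_0$ has trivial isosingular locus, whence $X_0$ is again harmonic, and likewise for $X_0'$. Cancelling the common factor $(\CC^k,\mathbf0)$ in $\Sing X\cong\Sing X_0\times(\CC^k,\mathbf0)$ and $\Sing X'\cong\Sing X_0'\times(\CC^k,\mathbf0)$ by the uniqueness of Ephraim's splitting then yields $\Sing X_0\cong\Sing X_0'$. Since also $\dim X_0=\dim X-k=\dim X'-k=\dim X_0'$, I may replace $X,X'$ by $X_0,X_0'$ and assume from now on that the isosingular loci are trivial.

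It remains to treat a harmonic hypersurface singularity with trivial isosingular locus: I must show that $\dim X=\dim X'$ together with an isomorphism of Tjurina algebras $\O_{\Sing X}\cong\O_{\Sing X'}$ implies a contact equivalence $X\cong X'$. Choosing defining equations $f,f'$ in a common $Y=(\CC^n,\mathbf0)$ (matching the number of variables by a splitting-lemma suspension if necessary) and lifting the given algebra isomorphism to $\varphi\in\Aut_\CC\Cx$, I may assume $\tilde\J_f=\tilde\J_{f'}$. The mechanism I would use is the homotopy (Thom--Levine) method: interpolate $f_t=(1-t)f+tf'$, seek a family $\varphi_t\in\Aut_\CC\Cx$ with $\varphi_t(f_t)\in\Cx^*\cdot f_0$, and note that this amounts to solving the infinitesimal contact-triviality equation $\dot f_t\in\tilde\J_{f_t}$ at each $t$ and integrating the resulting vector field.

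The hard part is precisely this last solvability, and it is the whole content of the cited theorems. For an isolated singularity it follows at once from the finite-dimensionality of the Tjurina algebra, which is the Mather--Yau case; in the non-isolated harmonic case one must instead control the extended Jacobian ideal $\tilde\J_{f_t}$ uniformly in $t$, and it is exactly here that harmonicity and the triviality of the isosingular locus enter, guaranteeing that the obstruction living in $\tilde\J_{f_t}/\J_{f_t}$ stays under control so that the Thom--Levine equation can be solved and the vector field integrated. I would therefore regard the reduction above as the part I can carry out directly and defer this analytic core to the work of Mather--Yau and Gaffney--Hauser in \cite{MY82,GH85,HM86}.
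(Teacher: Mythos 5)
The paper offers no proof of this theorem: the \enquote{only if} direction is noted (just before the statement) to follow from Remark~\ref{32}, which is exactly your argument, and the converse is imported as the cited theorem of Mather--Yau \cite{MY82} and Gaffney--Hauser \cite{GH85,HM86}. Your proposal is therefore consistent with the paper's treatment and goes somewhat further. The reduction to trivial isosingular locus is sound: harmonicity together with $\Iso X\subseteq\Iso\Sing X$ gives $\Iso X=\Iso\Sing X$ (you implicitly need $\Sing X\ne\emptyset$ here, but the smooth case is trivial since smooth germs of equal dimension are isomorphic), and the cancellation of the common factor $(\CC^k,\mathbf0)$ is covered by the uniqueness in Ephraim's splitting \cite[Thm.~0.2]{Eph78}; note also that the suspension step you mention is vacuous, since $\dim X=n-1$ already forces equal embedding dimensions. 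The homotopy (Thom--Levine) mechanism you sketch for the analytic core is the right one --- the paper carries out precisely this interpolation-and-integration argument in its proof of Theorem~\ref{4} in the strongly Euler homogeneous right-equivalence setting, and Remark~\ref{24} indicates how \cite[Part~I]{GH85} enters --- but be aware that your phrase about the obstruction in $\tilde\J_{f_t}/\J_{f_t}$ \enquote{staying under control} is not an argument: the uniform solvability of $\dot f_t\in\mm_Y\J_{f_t}+\ideal{f_t}$ along the family in the non-isolated case is the entire content of Gaffney--Hauser's work, and deferring it to \cite{MY82,GH85,HM86}, as you do, is exactly what the paper does.
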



In other words, the extended Jacobian ideal $\tilde\J_f$ determines the geometry of the hypersurface singularity $X=V(f)$.
This stunning fact has not been exploited systematically so far.
It is natural to study hypersurface singularities $X$ where $\Sing X$ is particularly simple.
In this spirit, Hauser and Schicho (see \cite[Prob.~2\textsuperscript\footnotesize{*}]{HS11}) formulated the following


\begin{prb}[Hauser--Schicho]\label{26}
Describe all power series $f\in\CC\{\bx\}$ for which the extended Jacobian ideal $\tilde\J_f$ is a \emph{monomial ideal}, that is, generated by monomials in terms of some local coordinates $\bx$ on $\CC^n$ at $\mathbf0$.
\end{prb}


Obviously this happens if $f$ is of the following type.

\begin{dfn}
We call a nonzero sum of non-constant monomials in disjoint sets of variables a \emph{Thom--Sebastiani polynomial}.
The particular case of a nonzero sum of positive powers of all variables is called a \emph{Brieskorn--Pham polynomial}.
\end{dfn}


\begin{exa}\label{43}
The \emph{Whitney umbrella} $X=V(f)$ defined by the Thom--Sebastiani polynomial $f=x^2+y^2z$ has a monomial extended Jacobian ideal $\tilde\J_f=\ideal{x,y^2,yz}$.
\end{exa}


We record some obvious properties of Thom--Sebastiani polynomials.

\begin{rmk}\label{33}
Let $f\in\CC\{\bx\}$ be a Thom--Sebastiani polynomial.
\begin{enumerate}[(a)]

\item\label{33a} Then $f$ is quasihomogeneous (see Remark~\ref{56}).

\item\label{33b} Unless $f$ is a monomial, it is squarefree.
Indeed, any multiple factor $g$ of $f$ divides a monomial $\frac{\partial f}{\partial x_i}$, which forces $g$ and hence $f$ to be a monomial.

\item\label{33c} For any unit $u\in\CC\{\bx\}^*$, $u\cdot f$ is right equivalent to a Thom--Sebastiani polynomial. 
In fact, for each monomial $\bx^\alpha$ of $f$ and a choice of $i$ with $\alpha_i\ne0$, considering $\sqrt[\alpha_i]u\cdot x_i$ as a new variable eliminates $u$.

\item\label{33d} By right equivalence, all degree two monomials of $f$ can be turned into squares. 
Indeed, a linear coordinate change replaces $x_ix_j$ by $x_i^2+x_j^2$.

\end{enumerate}
\end{rmk}


Our main result solves Problem~\ref{26} of Hauser and Schicho.

\begin{thm}\label{6}
The extended Jacobian ideal $\tilde\J_f$ of a power series $0\ne f\in\ideal\bx\lhd\Cx$ is monomial if and only if $f$ is right equivalent to a Thom--Sebastiani polynomial.
\end{thm}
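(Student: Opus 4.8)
The plan for the ``if'' direction is to compute $\tilde\J_g$ for a Thom--Sebastiani polynomial $g=\sum_j\bx^{\gamma_j}$ directly. Since each variable $x_i$ lies in the support of at most one monomial $\bx^{\gamma_j}$, every partial derivative $\frac{\partial g}{\partial x_i}$ is a scalar multiple of a single monomial $\bx^{\gamma_j-\be_i}$, so $\J_g$ is monomial. As $g$ is quasihomogeneous by Remark~\ref{33}(a), the Euler relation gives $g\in\mm\J_g$, whence $\tilde\J_g=\J_g$ is monomial. If now $\varphi(f)=g$ is a right equivalence, then $\varphi(\tilde\J_f)=\tilde\J_g$ by Remark~\ref{32}; since $\varphi$ is a coordinate change and being monomial may, by definition, be checked in suitable coordinates (see Problem~\ref{26}), $\tilde\J_f$ is monomial.

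For the ``only if'' direction, assume $\tilde\J_f$ is monomial and fix coordinates $\bx$ in which it is generated by monomials; by the invariance in Remark~\ref{32} nothing is lost in doing so. A monomial ideal is homogeneous, hence quasihomogeneous. I would then invoke the analytic result of the paper that a power series with quasihomogeneous extended Jacobian ideal is strongly Euler homogeneous, producing a vector field $E=\sum_i a_i\frac{\partial}{\partial x_i}$ with all $a_i\in\mm$ and $E(f)=f$. This gives $f=\sum_i a_i\frac{\partial f}{\partial x_i}\in\mm\J_f$, so that $\tilde\J_f=\J_f$; in other words the \emph{Jacobian} ideal itself is the prescribed monomial ideal.

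The core step is to realize the monomial ideal $\J_f=\ideal{\bx^{\beta_1},\dots,\bx^{\beta_m}}$, written through its unique minimal monomial generators, as $\J_g$ for an explicit Thom--Sebastiani polynomial $g$. Reading the formula for $\J_g$ backwards, I must group the generators and assign to each $\bx^{\beta}$ a derivative direction $i$ so that $\beta+\be_i$ is constant on each group $\gamma_j$ and the supports of the resulting monomials $\bx^{\gamma_j}$ are pairwise disjoint. The existence of such a consistent assignment is precisely a matroid transversal (system of distinct representatives) condition; I would encode monomial ideals carrying this data as \emph{Jacobian semigroup ideals} and prove that a monomial ideal is a Jacobian ideal exactly when it is of this type. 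Applying this characterization to $\J_f$, which is a genuine Jacobian ideal, yields the disjoint groups $\gamma_j$, and $g:=\sum_j\bx^{\gamma_j}$ (unit coefficients suffice, since they do not affect $\J_g$) satisfies $\J_g=\J_f$. I expect this combinatorial realization --- extracting the transversal and proving that every monomial Jacobian ideal is a Jacobian semigroup ideal --- to be the main obstacle.

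Finally, both $f$ and $g$ are strongly Euler homogeneous --- $g$ because it is quasihomogeneous, via the same Euler relation --- and they share the Jacobian ideal $\J_f=\J_g$. By the Mather--Yau-type theorem that strongly Euler homogeneous power series are determined up to right equivalence by their Jacobian ideals, $f$ and $g$ are right equivalent, which is the desired conclusion.
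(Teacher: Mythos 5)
Your plan follows the paper's proof of Theorem~\ref{6} essentially step for step: sufficiency via Remark~\ref{32} together with the elementary computation of $\tilde\J_g$ for a Thom--Sebastiani polynomial, and necessity via Theorem~\ref{3} (strong Euler homogeneity, whence $\J_f=\tilde\J_f$ is monomial), the transversal-matroid combinatorics of Remark~\ref{13} and Proposition~\ref{1} producing a Thom--Sebastiani polynomial with the same Jacobian ideal, and Theorem~\ref{4} to conclude right equivalence. The combinatorial realization you correctly flag as the main obstacle is exactly the content of Proposition~\ref{1}, and your sketch of it --- extracting a system of distinct representatives and regrouping the minimal monomial generators into disjoint-support blocks --- matches the paper's argument.
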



Its proof in \S\ref{9} relies on a combinatorial study of monomial Jacobian ideals in \S\ref{8}:
By passing to exponents of $f$ we introduce a notion of \emph{Jacobian semigroup ideal} which implements the underlying linear algebra in terms of a transversal matroid (see Definition~\ref{2} and Remark~\ref{13}).
In Proposition~\ref{1}, we show that every such semigroup ideal arises from the exponents of a Thom--Sebastiani polynomial $f'$.
The claimed right equivalence of $f$ and $f'$ then follows from a Mather--Yau-type Theorem~\ref{4} for strongly Euler homogeneous power series (see Definition~\ref{25}).
The homogeneity hypothesis is satisfied if $\tilde\J_f$ is monomial due to Theorem~\ref{3}, which generalizes a result of Xu and Yau in the isolated singularity case (see \cite[Thm.~1.2]{XY96}).

\smallskip


We collect some consequences of Theorem~\ref{6}.
In the isolated singularity case a result of K.~Saito yields

\begin{cor}\label{18}
If $f\in\Cx$ has an isolated critical point and $\tilde\J_f$ is monomial, then $f$ is right equivalent to a Brieskorn--Pham polynomial.
\end{cor}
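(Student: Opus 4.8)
The plan is to combine Theorem~\ref{6} with the structure theory of isolated quasihomogeneous singularities. By Theorem~\ref{6}, the hypothesis that $\tilde\J_f$ is monomial provides a right equivalence $\varphi(f)=g$ in which $g=\bx^{\alpha_1}+\dots+\bx^{\alpha_r}$ is a Thom--Sebastiani polynomial with pairwise disjoint supports $S_1,\dots,S_r$; by Remark~\ref{33} it is quasihomogeneous. Since right equivalence is a coordinate change, Remark~\ref{32} gives $\J_g=\varphi(\J_f)$, so $g$ again has an isolated critical point at $\mathbf0$. I may moreover assume that $g$ has no linear term, since otherwise $f$ is a coordinate and the claim is clear.

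The key step, which is where the result of K.~Saito enters, is to exploit the disjointness of the supports. For $i\in S_k$ only $\bx^{\alpha_k}$ involves $x_i$, so $\frac{\partial g}{\partial x_i}=\frac{\partial\bx^{\alpha_k}}{\partial x_i}$, while $\frac{\partial g}{\partial x_i}=0$ for any variable $x_i$ lying outside every support. Hence the critical locus decomposes as a product
\[
V(\J_g)=\Bigl(\prod_{k}V(\J_{\bx^{\alpha_k}})\Bigr)\times\prod_{x_i\notin\bigcup_k S_k}\CC_{x_i},
\]
and its being the single point $\set{\mathbf0}$ forces every variable to lie in some support and each $\bx^{\alpha_k}$ to define an isolated critical point in its own variables. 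Saito's classification of isolated quasihomogeneous singularities now says that, for each variable $x_i$, the polynomial must contain a monomial $x_i^{a_i}$ or $x_i^{a_i}x_j$; by disjointness this monomial can only be $\bx^{\alpha_k}$ itself, so each $\bx^{\alpha_k}$ is either a pure power $x_i^{a_i}$ or a two-variable monomial. Applying the same constraint to the second variable of a two-variable monomial, which again occurs only in $\bx^{\alpha_k}$, drives its exponents down to $x_ix_j$. Alternatively, one checks directly that any monomial in at least two variables other than $x_ix_j$ contains a coordinate axis in its critical locus, contradicting isolatedness.

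It then remains to normalize the degree-two pieces. By the linear change of coordinates of Remark~\ref{33}, which by disjointness of supports affects only $x_i$ and $x_j$, each factor $x_ix_j$ is replaced by $x_i^2+x_j^2$. After this further right equivalence every monomial of $g$ is a pure power and every variable occurs, so $g$ is a Brieskorn--Pham polynomial, as claimed. The only genuine obstacle is the structural input of the middle paragraph; granting it, the remainder is bookkeeping resting on Theorem~\ref{6} and the decomposition above.
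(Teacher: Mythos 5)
Your argument is correct and follows essentially the same route as the paper: reduce to a Thom--Sebastiani polynomial via Theorem~\ref{6}, invoke Saito's criterion that each variable $x_i$ must appear in a monomial $x_i^m$ or $x_i^mx_j$, use disjointness of supports to force the mixed case down to $x_ix_j$, and finish with the linear change of Remark~\ref{33}.\eqref{33d}. The product decomposition of the critical locus is a harmless extra observation, not a different method.
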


\begin{proof}
By Theorem~\ref{6}, we may assume that $0\ne f\in\mm_Y^2$ is a Thom--Sebastiani polynomial with isolated critical point.
In particular, $f$ is quasihomogeneous by Remark~\ref{33}.\eqref{33a}.
Then there must be, for each $i\in\set{1,\dots,n}$, a monomial $x_i^m$ or $x_i^mx_j$ in $f$ where $m\ge1$ (see \cite[Kor.~1.6]{Sai71}).
In the second case, switching $i$ and $j$ forces $m=1$ and Remark~\ref{33}.\eqref{33d} applies.
\end{proof}


In geometric terms, $\tilde\J_f$ monomial means that $\Sing X$ is normal crossing (see Definition~\ref{16}).
Using Remarks~\ref{33}.\eqref{33a} and \eqref{33b} and Proposition~\ref{51} we obtain

\begin{cor}\label{17}
Any hypersurface singularity $X=V(f)$ with normal crossing singular locus $\Sing X$ is quasihomogeneous, holonomic and either reduced, or a (possibly non-reduced) normal crossing divisor.\qed
\end{cor}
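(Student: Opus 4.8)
The plan is to reduce to the situation where $f$ itself is a Thom--Sebastiani polynomial and then read off each asserted property in turn. By Definition~\ref{16} the hypothesis that $\Sing X$ be normal crossing is exactly the condition that $\tilde\J_f$ be monomial, so Theorem~\ref{6} furnishes a right equivalence $\varphi(f)=f'$ with $f'$ a Thom--Sebastiani polynomial. By Remark~\ref{32} this induces an isomorphism $X\cong X'=V(f')$ of hypersurface singularities, and since quasihomogeneity, holonomicity, reducedness, and the property of being a (possibly non-reduced) normal crossing divisor are all invariants of the isomorphism type of $X$, I may replace $f$ by $f'$ and assume from the start that $f$ is a Thom--Sebastiani polynomial.

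Granting this reduction, two of the three assertions are immediate. Quasihomogeneity of $X$ is precisely Remark~\ref{33}.\eqref{33a}. For the reducedness dichotomy I invoke Remark~\ref{33}.\eqref{33b}: either $f$ is a single monomial, in which case, after reordering the variables, $\ideal f=\ideal{x_1^{a_1}\cdots x_k^{a_k}}$ and $X$ is the (possibly non-reduced) normal crossing divisor $\sum_i a_i V(x_i)$; or $f$ is not a monomial and is therefore squarefree, so that $\ideal f$ is radical and $X$ is reduced. These are exactly the two alternatives in the statement.

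The substantive point, and the step I expect to be the main obstacle, is holonomicity. In the monomial case $X$ is normal crossing and holonomicity is classical, so the remaining work concerns the reduced, squarefree case. Here I would appeal to Proposition~\ref{51}, whose hypotheses I would verify for a squarefree Thom--Sebastiani polynomial; the essential input is the disjointness of the variable supports of the monomials of $f$ together with its quasihomogeneity, which together pin down the logarithmic stratification finely enough to conclude that $X$ is holonomic. I would therefore concentrate the effort on matching the setup of Proposition~\ref{51}, the other two properties being formal consequences of Remark~\ref{33}.
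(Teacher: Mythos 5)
Your proof is correct and follows essentially the same route as the paper: reduce to a Thom--Sebastiani polynomial via Theorem~\ref{6}, then invoke Remark~\ref{33}.\eqref{33a}, Remark~\ref{33}.\eqref{33b}, and Proposition~\ref{51}. The only superfluous worry is your anticipated ``main obstacle'': Proposition~\ref{51} applies verbatim to \emph{every} Thom--Sebastiani polynomial (monomial or squarefree), so there are no additional hypotheses to verify and no case distinction is needed for holonomicity.
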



Due to the Aleksandrov--Terao Theorem, the notion of \emph{Saito-free divisor} generalizes by requiring Cohen--Macaulayness of a generalized Jacobian ideal (see \cite[Def.~5.1]{GS12}, \cite[Def.~5.5]{Sch16}, \cite[Def.~4.3]{Pol20}).
Results of Epure and Pol (see \cite[Cor.~5.5]{Pol20} and \cite[Thm.~2]{EP21}) yield the final conclusion in

\begin{cor}\label{19}
A reduced normal crossing singular locus of a hypersurface singularity is a Cartesian product of equidimensional unions of coordinate subspaces, and hence, a free singularity.\qed
\end{cor}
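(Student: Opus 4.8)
The plan is to reduce to a Thom--Sebastiani normal form, read a cartesian product decomposition of $\Sing X$ off the disjoint supports of the monomials, and then rigidify each factor using the reducedness hypothesis; freeness will then follow by citation.

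First I would note that a normal crossing $\Sing X$ has monomial extended Jacobian ideal (Definition~\ref{16}), so by Theorem~\ref{6} I may replace $f$ by a right equivalent Thom--Sebastiani polynomial without changing $\Sing X$ up to isomorphism (Remark~\ref{32}). Writing $f=\sum_i\bx^{\alpha_i}$ with pairwise disjoint supports $S_i=\supp\alpha_i$, the identity $\bx^{\alpha_i}=\tfrac1{(\alpha_i)_k}\,x_k\,\partial_{x_k}\bx^{\alpha_i}$ for any $k\in S_i$ shows each monomial lies in $\J_f$; hence $f\in\J_f$ and $\tilde\J_f=\J_f=\sum_i\J^{(i)}$, where $\J^{(i)}\unlhd\CC\{x_k:k\in S_i\}$ is the Jacobian ideal of the single monomial $\bx^{\alpha_i}$. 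Since the $S_i$ are disjoint, this exhibits $\Sing X$ as a cartesian product $\prod_iV(\J^{(i)})\times(\CC^{\abs{S_0}},\mathbf0)$, where $S_0$ collects the variables not occurring in $f$.

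Next I would classify the factors. A direct computation gives $\partial_{x_k}\bx^{\alpha_i}=(\alpha_i)_k\,\bx^{\alpha_i}/x_k$, whence $\J^{(i)}=\bigl(\prod_{k\in S_i}x_k^{(\alpha_i)_k-1}\bigr)\cdot\ideal{\,\prod_{l\in S_i,\,l\ne k}x_l : k\in S_i\,}$. I would then check that $\J^{(i)}$ is radical precisely when $\bx^{\alpha_i}$ is a square $x_k^2$ or a squarefree monomial: any exponent $\ge2$ occurring alongside a second variable (as in the monomial $y^2z$ of the Whitney umbrella, Example~\ref{43}), and any pure power $x_k^a$ with $a\ge3$, produces a monomial in the radical but not in $\J^{(i)}$. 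After turning the degree-two monomials into squares by Remark~\ref{33}.\eqref{33d}, the reducedness of $\Sing X$ — equivalently of every $\J^{(i)}$, as the supports are disjoint and a tensor product of reduced $\CC$-algebras is reduced — leaves only squares $x_k^2$, whose factor is the reduced point $V(x_k)$, and squarefree monomials $x_{k_1}\cdots x_{k_r}$ with $r\ge3$, whose factor $V(\J^{(i)})$ is the union of the $r$ coordinate axes in the variables $x_{k_1},\dots,x_{k_r}$. Each such factor, together with the smooth factor $(\CC^{\abs{S_0}},\mathbf0)$, is an equidimensional union of coordinate subspaces, which proves the first assertion.

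Finally, freeness follows from the cited results: each equidimensional union of coordinate subspaces is a free singularity by \cite[Cor.~5.5]{Pol20}, and a cartesian product of free singularities is free by \cite[Thm.~4.7]{EP21}. I expect the main obstacle to be the radicality classification of the single-monomial ideals $\J^{(i)}$: one must verify not only that squares and squarefree monomials yield reduced (and automatically equidimensional) factors, but also that every remaining monomial fails to be reduced, so that the reducedness hypothesis genuinely forces the rigid product-of-coordinate-subspaces shape and rules out the non-equidimensional normal crossing configurations (such as a hyperplane meeting an axis) that arise from the excluded monomials.
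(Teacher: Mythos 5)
Your argument follows the route the paper intends for this corollary (which it leaves to the reader with a \qed): reduce to a Thom--Sebastiani polynomial via Theorem~\ref{6}, split $\tilde\J_f=\J_f$ into the Jacobian ideals of the individual monomials over their disjoint supports so that $\Sing X$ becomes a cartesian product, use radicality to rule out every monomial except squares and squarefree ones, and quote \cite[Cor.~5.5]{Pol20} and \cite[Thm.~4.7]{EP21} for freeness. One small correction: for a squarefree monomial $x_{k_1}\cdots x_{k_r}$ with $r\ge3$, the zero locus of $\J^{(i)}=\ideal{\prod_{l\ne k}x_l : k}$ is the union of the $\binom{r}{2}$ coordinate subspaces of codimension two (the pairwise intersections of the branches of the normal crossing divisor), which coincides with the $r$ coordinate axes only when $r=3$; this does not affect your conclusion, since these are still equidimensional unions of coordinate subspaces of dimension $r-2$.
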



We conclude with an application to E.~Faber's conjecture, which aims for characterizing non-smooth normal crossing divisors as hypersurface singularities $X=V(f)$ with $\J_f$ radical and equidimensional of height $2$ (see \cite[Conj.~2]{Fab15}).
From $\J_f$ radical it follows that $f$ is Euler homogeneous (see \cite[Lem.~1]{Fab15}).
In particular, $\J_f=\tilde\J_f$ depends only on $X$.

\begin{cor}\label{20}
A non-smooth hypersurface singularity $X=V(f)$ is a reduced normal crossing divisor if and only if $\J_f$ is monomial and radical of height $2$.
\end{cor}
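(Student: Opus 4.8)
The plan is to combine Theorem~\ref{6} with the chain of corollaries just established, reducing the statement to a clean equivalence between the combinatorial data (monomial, radical, height $2$) and the geometric data (reduced normal crossing divisor). I would prove the two implications separately, and I expect the forward direction to be essentially a structural computation while the reverse direction is where the real work lies.

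For the \emph{reverse} implication, suppose $X=V(f)$ is a non-smooth reduced normal crossing divisor. Then $\Sing X$ is itself normal crossing, so by the geometric interpretation recorded before Corollary~\ref{17} the ideal $\tilde\J_f$ is monomial. Since $X$ is reduced, $f$ is squarefree; being a normal crossing divisor, $f$ is (right equivalent to) a squarefree Thom--Sebastiani polynomial, hence quasihomogeneous by Remark~\ref{33}.\eqref{33a}, and in particular Euler homogeneous. Euler homogeneity gives $f\in\J_f$, so $\J_f=\tilde\J_f$ is monomial. The height and radicality of $\J_f$ then follow from the explicit normal crossing form: after the coordinate change, $f=\sum x_i$ over the crossing variables (up to the disjoint Thom--Sebastiani structure), whose Jacobian ideal is generated by the products obtained by omitting one coordinate at a time, giving a radical monomial ideal equidimensional of height $2$. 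I would verify height $2$ by noting that each minimal prime of $\J_f$ corresponds to a codimension-$2$ coordinate subspace where two sheets of the normal crossing meet.

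For the \emph{forward} implication, suppose $\J_f$ is monomial, radical of height $2$, and $X$ is non-smooth. Radicality of $\J_f$ forces $f$ to be Euler homogeneous (this is the cited \cite[Lem.~1]{Fab15} invoked in the discussion preceding the corollary), whence $f\in\J_f$ and therefore $\tilde\J_f=\J_f$ is monomial. By Theorem~\ref{6}, $f$ is right equivalent to a Thom--Sebastiani polynomial, and I may assume $f$ is itself of that form. Since $\J_f$ is radical it is squarefree, so by Remark~\ref{33}.\eqref{33b} the polynomial $f$ is squarefree, hence $X$ is reduced. It remains to use the height-$2$ hypothesis to pin down that $X$ is \emph{normal crossing} rather than a more general reduced Thom--Sebastiani divisor: the condition $\operatorname{height}\J_f=2$ constrains the degrees of the monomials of $f$ so severely that, together with squarefreeness and the quasihomogeneity from Remark~\ref{33}.\eqref{33a}, each variable can appear only in a monomial of the shape $x_i$ or $x_ix_j$, and Remark~\ref{33}.\eqref{33d} turns the quadratic terms into the standard normal crossing $\sum_i x_i$.

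The main obstacle will be the last step of the forward direction: extracting from $\operatorname{height}\J_f=2$ the precise conclusion that $f$ is a product of distinct coordinates after a change of variables. A Thom--Sebastiani polynomial with higher-degree blocks (such as a Brieskorn--Pham summand $x^p$ with $p\ge2$) already fails to have radical or height-$2$ Jacobian ideal, so I expect the argument to proceed by a case analysis on the monomial structure of each Thom--Sebastiani block, showing that any block of degree $\ge 3$ or any pure power $x_i^m$ with $m\ge2$ either destroys radicality or raises the height, leaving only the linear-and-quadratic normal crossing shape as admissible.
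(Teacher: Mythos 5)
Your overall architecture (Faber's lemma to get $\tilde\J_f=\J_f$, then Theorem~\ref{6} to reduce to a Thom--Sebastiani polynomial, then radicality to force squarefreeness, then a height count over the disjoint blocks) is exactly the paper's route, and your reverse direction is essentially fine. But the last step of your forward direction rests on a false claim. You assert that any Thom--Sebastiani block of degree $\ge 3$ ``either destroys radicality or raises the height,'' so that only linear and quadratic monomials survive. This is wrong: a single squarefree monomial block $x_1\cdots x_k$ with $k\ge 3$ has $\J_f=\ideal{\,\prod_{j\ne i}x_j \mid i\in[k]\,}$, which is generated by squarefree monomials (hence radical) and has height exactly $2$ (its zero locus is the union of the codimension-$2$ coordinate subspaces $V(x_i,x_j)$). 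So this case is admissible, and indeed it is one of the two cases in the paper's proof (``$f\in\ideal{\bx}^3$ is a monomial''). Your case analysis would wrongly exclude it, and what it does leave standing it misidentifies: a reduced normal crossing divisor is $V(x_1\cdots x_k)$, a \emph{product} of coordinates, not ``$\sum_i x_i$'' (which defines a smooth hypersurface). The same product/sum slip appears in your reverse direction, though there you then correctly describe $\J_f$ as generated by the products omitting one coordinate, so it reads as a typo rather than an error.

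The correct count is: height is additive over the disjoint-variable blocks; after Remark~\ref{33}.\eqref{33d} each degree-two monomial may be taken to be $x_i^2$, contributing $1$, while each squarefree block in at least two variables contributes $2$ (and $x_i^m$ with $m\ge3$ is excluded by radicality, linear terms by non-smoothness). Height $2$ then forces either $f=x_1^2+x_2^2$ (right equivalent to $x_1x_2$) or $f$ a single squarefree monomial in $\ideal{\bx}^3$; both define reduced normal crossing divisors. You should also tighten the appeal to Remark~\ref{33}.\eqref{33b}: that remark gives squarefreeness of $f$ for free when $f$ is not a monomial; in the monomial case it is the radicality of $\J_f$ (squarefree minimal monomial generators) that forces the monomial itself to be squarefree.
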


\begin{proof}
Suppose that $\J_f$ is monomial and radical of height $2$.
By Theorem~\ref{6}, $f$ is then a Thom--Sebastiani polynomial of squarefree monomials in terms of variables $\bx=x_1,\dots,x_n$.
Each monomial contributes $1$ to the height of $\J_f$ if quadratic, and $2$ otherwise.
Then either $f=x_1^2+x_2^2$, or $f\in\ideal{\bx}^3$ is a monomial.
In the first case, $f=x_1x_2$ after a linear coordinate change.
Thus, $X$ is a reduced normal crossing divisor in both cases.
\end{proof}

\subsection*{Acknowledgements}

Groundwork for this article was laid in the Ph.D.~thesis of the first-named author (see \cite{Epu20}) under the direction of the second named author.
We thank Herwig Hauser for reviewing this thesis and for many valuable comments and suggestions.

\section{Jacobian semigroup ideals}\label{8}

In this section, we describe the combinatorics underlying our problem by combining the structures of semigroup ideals and transversal matroids.


Fix $n\in\NN$ and set $[n]:=\set{1,\dots,n}$.
Recall that a \emph{matroid} $\M$ on $[n]$ axiomatizes the notion of linear dependence of families of $n$ vectors in a vector space (see \cite{Oxl11}).
Among other options, it can be defined by the data of \emph{independent sets}, or that of \emph{bases}, that is, maximal independent sets.
Both are distinguished subsets of the ground set $[n]$, subject to corresponding matroid axioms.
These latter implement standard theorems of linear algebra such as the Steinitz exchange lemma and the basis extension theorem.
The \emph{rank} $\rk(S)$ of a subset $S\subseteq[n]$ is the maximal cardinality of an independent subset of $S$.
The rank $\rk\M:=\rk([n])$ of the matroid $\M$ equals the cardinality of any basis.


Consider the commutative monoid $M:=(\NN^n,+)$.
The \emph{support} of an element $\alpha\in M$ is the set 
\begin{equation}\label{55}
[\alpha]:=\set{i\in[n]\mid\alpha_i\ne0},
\end{equation}
its \emph{degree} is defined by
\[
\abs{\alpha}:=\alpha_1+\dots+\alpha_n.
\] 
For any subset $F\subseteq M$, we consider the union of supports of all its elements,
\begin{equation}\label{57}
[F]:=\bigcup_{\alpha\in F}[\alpha].
\end{equation}


For any semigroup ideal $I\unlhd M$ the set of minima $\Min(I)$ with respect to the partial ordering is the (unique) minimal set of generators.
By Dickson's Lemma, it has finite cardinality
\[
\mu(I):=\abs{\Min(I)}<\infty.
\]


For $i\in[n]$, denote by $\be_i:=(\delta_{i,j})_j\in M$ the $i$th unit vector and define \emph{partial differentiation operators} 
\[
\delta_i:M\to M\cup\set{\infty},\quad
\delta_i(\alpha):=\begin{cases}
\infty & \text{if } \alpha_i=0,\\
\alpha-\be_i & \text{otherwise}.
\end{cases}
\]
For $F\subseteq M\ni\alpha$, we write 
\[
\delta(F):=\bigcup_{i\in[n]}\delta_i(F)\setminus\set\infty\subseteq M,\quad
\delta(\alpha):=\delta(\set\alpha).
\]


\begin{dfn}\label{2}
Let $F\subseteq M$ be a subset and consider the semigroup ideal
\[
J_F:=\ideal{\delta(F)}\unlhd M.
\]
We call the \emph{transversal matroid} $\M_F$ associated with the covering of $\Min(J_F)$ 
\[
\set{\Min(J_F)\cap\delta_i(F)\mid i\in[n]}\subseteq 2^{\Min(J_F)}
\]
the \emph{Jacobian matroid} of $F$ (see \cite[\S1.6]{Oxl11}).
Its independent sets are the \emph{partial transversals} of the covering, that is, injective maps
\[
\begin{tikzcd}
\left[n\right]\supseteq I\ar[hookrightarrow]{r}{\psi} & \Min(J_F),\quad\text{where }\psi(i)\in\delta_i(F)\text{ for all }i\in I.
\end{tikzcd}
\]
Note that the \emph{rank} of $\psi$ equals $\rk(\psi)=\abs I$.
If $\rk\M_F=\mu(J_F)$, then refer to $J_F$ as the \emph{Jacobian semigroup ideal} of $F$.
\end{dfn}


Note that it is a strong requirement on $F$ to have a Jacobian semigroup ideal $J_F$.
In fact, typically $\abs{\mu(J_F)}>n$, whereas $\rk\M_F\le n$.


Our terminology is motivated by the following

\begin{rmk}\label{13}
Consider the \emph{support} of a power series in variables $\bx=x_1,\dots,x_n$,
\[
f=\sum_{\alpha}f_\alpha\bx^\alpha\in\CC\{\bx\},\quad F:=\supp(f)=\set{\alpha\in M\mid f_\alpha\ne0}\subseteq M.
\]
Suppose that $\J_f$ is generated by monomials in terms of the variables $\bx$.
Then $J_F$ is the set of exponents of monomials in $\J_f$, and $\Min(J_F)$ is the subset of exponents of minimal monomial generators of $\J_f$.
Since $\J_f$ is generated by $n$ many partial derivatives, its minimal number of generators is bounded by
\[
\mu(J_F)=\mu(\J_f)\le n.
\]
By Nakayama's Lemma, any minimal generators $\frac{\partial f}{\partial x_i}$, $i\in I$, of $\J_f$ map to a basis of the $\CC$-vector space $\J_f/\ideal{\bx}\J_f$, which has a monomial basis with exponents in $\Min(J_F)$.
Gaussian Elimination yields a bijection $\psi\colon I\to\Min(J_F)$ such that $\bx^{\psi(i)}$ is a monomial of $\frac{\partial f}{\partial x_i}$, and hence, $\psi(i)\in\delta_i(F)$ for all $i\in I$.
Thus, $\psi$ is a partial transversal.
It follows that
\[
\mu(\J_f)=\abs{I}\le\rk\M_F\le\mu(J_F)=\mu(\J_f)
\]
is an equality.
This makes $J_F$ a Jacobian semigroup ideal.
\end{rmk}


The impression that Jacobian semigroup ideals are quite special is further supported by our main combinatorial result.
It replaces $F$ by the support of a Thom--Sebastiani polynomial leaving $J_F$ unchanged.
We first illustrate its proof in the simple case of the Whitney umbrella from Example~\ref{43}.


\begin{exa}\label{42}
For $f=x^2+y^2z$ with $J_f=\langle x,yz,y^2\rangle$, we obtain
\begin{align*}
F&=\set{(2,0,0),(0,2,1)},\\
\delta_1(F)&=\set{(1,0,0)},\quad\delta_2(F)=\set{(0,1,1)},\quad\delta_3(F)=\set{(0,2,0)},\\
\delta((2,0,0))&=\set{(1,0,0)},\quad\delta((0,2,1))=\set{(0,1,1),(0,2,0)},\\
\Min(J_F)&=\set{(1,0,0),\ (0,1,1),\ (0,2,0)}.
\end{align*} 
We can reconstruct $F$ from $J_F$ as follows:
Choose
\[
\psi(1):=(1,0,0)\in\Min(J_F),\quad\psi(1)=\delta_1(\alpha),\quad\alpha:=(2,0,0)\in F,
\]
to obtain a partial transversal $\psi\colon[1]\hookrightarrow\Min(J_F)$.
Set $F':=\set{\alpha}$.
Since
\[
\rk(\psi)=\abs{[1]}=1<3=\mu(J_F)=\rk\M_F,
\]
$\psi$ extends to $[2]$ by
\[
\psi(2)\in\delta_2(F)=\set{\delta_2((0,2,1))},\quad(0,2,1):=\alpha',
\]
and $F'\cup\set{\alpha'}=F$. 
Then 
\[
\delta(F')=\psi([2])\sqcup\set{\delta_3(\alpha')},\quad\delta_3(\alpha')=:\psi(3),
\]
extends $\psi$ to $[3]$ and the process terminates.
\end{exa}


We now develop the approach of Example~\ref{42} into a general argument.


\begin{prp}\label{1}
Let $F\subseteq M$ be a subset such that $J_F$ is a Jacobian semigroup ideal.
Then $J_F=J_{F'}$ for some subset $F'\subseteq F\setminus\set{\mathbf0}$ whose elements $\alpha\in F'$ have disjoint supports $[\alpha]$ and contribute minimal generators $\delta(\alpha)\subseteq\Min(J_F)$ of $J_F$.
\end{prp}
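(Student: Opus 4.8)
The plan is to reconstruct $F'$ from a single full transversal, reducing all three required properties to one structural claim. Since $J_F$ is a Jacobian semigroup ideal, $\rk\M_F=\mu(J_F)$, so the whole ground set $\Min(J_F)$ is independent and there is a bijective partial transversal $\psi\colon I\to\Min(J_F)$ with $I\subseteq[n]$ and $\psi(i)\in\delta_i(F)$ for all $i\in I$. For $i\in I$ put $\alpha_i:=\psi(i)+\be_i\in F$, and set $F':=\{\alpha_i:i\in I\}\subseteq F\setminus\{\mathbf0\}$. I would then reduce the proposition to the single claim $(\star)$: for every $i\in I$ and every $j\in[\alpha_i]$ one has $j\in I$ and $\psi(j)=\alpha_i-\be_j$. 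Indeed, $(\star)$ says $\alpha_j=\psi(j)+\be_j=\alpha_i$, so the map $i\mapsto\alpha_i$ is constant on the support of its value; hence distinct elements of $F'$ have disjoint supports and $I=\bigsqcup_{\alpha\in F'}[\alpha]$. Moreover $\delta(\alpha_i)=\{\alpha_i-\be_j:j\in[\alpha_i]\}=\{\psi(j):j\in[\alpha_i]\}\subseteq\psi(I)=\Min(J_F)$, giving the minimal‑generator condition, and $\delta(F')=\psi(I)=\Min(J_F)$. Since $\Min(J_F)\subseteq\delta(F)$, this yields $J_{F'}=\langle\delta(F')\rangle=J_F$, as required.

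Next I would prove the \emph{minimality half} of $(\star)$, namely $\alpha_i-\be_j\in\Min(J_F)$ for all $j\in[\alpha_i]$, by a monomial comparison. Write $m:=\psi(i)=\alpha_i-\be_i\in\Min(J_F)$ and suppose some $g\in\Min(J_F)$ satisfies $g\le\alpha_i-\be_j$ with $g\ne\alpha_i-\be_j$. If $g_i<(\alpha_i)_i$, then coordinatewise $g_i\le(\alpha_i)_i-1=(\alpha_i-\be_i)_i$ while $g_k\le(\alpha_i-\be_j)_k\le(\alpha_i-\be_i)_k$ for $k\ne i$, so $g\le\alpha_i-\be_i=m$; minimality of $m$ forces $g=m$, and then $m\le\alpha_i-\be_j$ contradicts incomparability of the siblings of $\alpha_i$. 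Thus the only way $\alpha_i-\be_j$ can fail to be minimal is through a generator $g$ with $g_i=(\alpha_i)_i$, i.e. one ``hidden'' along the coordinate $i$ used to match $m$.

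The hard part will be excluding exactly this hidden generator, together with the matching‑consistency $\psi(j)=\alpha_i-\be_j$; both genuinely need the \emph{equality} $\rk\M_F=\mu(J_F)$ rather than a mere inequality. I would argue by a Hall/exchange argument on the covering $\{\Min(J_F)\cap\delta_i(F)\}$: a hidden $g<\alpha_i-\be_j$ with $g_i=(\alpha_i)_i$, or an index $j\in[\alpha_i]$ whose generator $\alpha_i-\be_j\in\Min(J_F)$ is matched by $\psi$ to some $j'\ne j$, would confine a block of minimal generators to strictly fewer admissible indices, so no bijective transversal could saturate $\Min(J_F)$ — contradicting $\rk\M_F=\mu(J_F)$. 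In effect $(\star)$ asserts that the full transversal is forced to be ``block‑diagonal'' along the supports of the elements of $F$, and this global rigidity is the crux of the proof.

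This is precisely the inductive picture of Example~\ref{42}, where the claim is established one block at a time: having matched a sub‑transversal by elements of $F'$ with disjoint supports, the gap $|I_t|=\rk\psi_t<\mu(J_F)=\rk\M_F$ forces an uncovered generator $m$ to be realized by a \emph{fresh} element $\alpha^*=m+\be_k\in F$ whose entire support and whose sibling generators $\delta(\alpha^*)$ are new, extending $\psi$ by a whole block. I expect the block‑diagonal/Hall step above to be the only substantial obstacle; granting it, finiteness of $\Min(J_F)$ gives termination with $\delta(F')=\Min(J_F)$ and hence $J_{F'}=J_F$.
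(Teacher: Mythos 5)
Your reduction of the Proposition to the claim $(\star)$ is sound: given a full transversal $\psi\colon I\to\Min(J_F)$ with the block-diagonal property, the preimages $\alpha_i=\psi(i)+\be_i$ do yield an $F'$ with disjoint supports, $\delta(F')=\psi(I)=\Min(J_F)$ and hence $J_{F'}=J_F$. Your \enquote{minimality half} is also correct as far as it goes: if $g\in\Min(J_F)$ satisfies $g\le\alpha_i-\be_j$ and $g_i<(\alpha_i)_i$, then $g\le\alpha_i-\be_i=\psi(i)$ forces $g=\psi(i)$, which is incompatible with $g\le\alpha_i-\be_j$ at the $j$th coordinate. But this only disposes of the easy case. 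The entire content of the Proposition sits in the two steps you leave open: excluding a \enquote{hidden} generator $g<\alpha_i-\be_j$ with $g_i=(\alpha_i)_i$, and the matching consistency $\psi(j)=\alpha_i-\be_j$ for all $j\in[\alpha_i]$. For these you only announce \enquote{a Hall/exchange argument} and then explicitly write \enquote{granting it}. That is not a proof; it is a restatement of the difficulty. Worse, it is not evident that $(\star)$ holds for an \emph{arbitrary} full transversal and arbitrary starting point -- the paper never claims this, and your Hall-type heuristic gives no mechanism for deriving a contradiction from a hidden $g$ whose support is not controlled.

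Compare with how the paper closes exactly this gap. The transversal is built incrementally, enumerating $\Min(J_F)$ by \emph{increasing degree}, and at each stage the new generator $\beta=\delta_k(\alpha)$ is chosen of minimal degree among those not yet reached; since $\rk\M_F=\mu(J_F)$, every unreached generator is reachable, so every element of $\Min(J_F)$ of degree $<\abs\beta$ already lies in $\psi([\ell])$. Consequently, if some $\delta_i(\alpha)$ failed to be a new minimal generator, the obstructing $\gamma\in\Min(J_F)$ with $\gamma\le\delta_i(\alpha)$ would have to lie in $\psi([\ell])=\delta(F')$, hence be supported in $[\ell]\not\ni k$, so $\gamma_k=0<\alpha_k$ and one may subtract $\be_k$ to get $\beta-\be_i=\alpha-\be_i-\be_k\ge\gamma\in J_F$, contradicting $\beta\in\Min(J_F)$. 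This is precisely the step your static, global transversal cannot reproduce, because you have no reason to know where the hidden $g$ is supported. The paper also needs a genuine matroid exchange step (repeatedly replacing $\alpha$ by some $\alpha'$ with $\delta_i(\alpha)=\delta_j(\alpha')$, $j\notin[\ell]\cup[\alpha]$) to arrange $[\ell]\cap[\alpha]=\emptyset$ before enlarging $F'$; your proposal asserts the corresponding disjointness (\enquote{whose entire support \dots are new}) without argument. So the proposal correctly identifies the shape of the answer but is missing the two ideas -- degree-ordered construction and support-disjointness via exchange -- that actually prove it.
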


\begin{proof}
For increasing $\ell$, we construct partial transversals
\[
\begin{tikzcd}
\left[\ell\right]\ar[hookrightarrow]{r}{\psi} & \Min(J_F),
\end{tikzcd}
\]
enumerating $\Min(J_F)$ by increasing degree, together with subsets $F'\subseteq F$ such that
\[
[F']=\bigsqcup_{\alpha\in F'}[\alpha]=[\ell],\quad \delta(F')=\psi([\ell]).
\]
The claim is proven when equality has been reached in
\[
\rk(\psi)=\ell\le\mu(J_F)=\rk\M_F.
\]
Otherwise, an $\alpha\in F\setminus\set{\mathbf0}$ extends $\psi$ to a $k\in[\alpha]\setminus[\ell]$ by a new minimal generator
\[
\beta:=\psi(k)=\delta_k(\alpha)\in\Min(J_F)\setminus\psi([\ell]).
\]
Since $\psi$ is part of a basis of $\M_F$, the degree $\abs{\beta}$ can be chosen minimal.
Suppose that, for some $i\in[\alpha]$, $\delta_i(\alpha)\not\in\Min(J_F)\setminus\psi([\ell])$ is not a new minimal generator.
Since $\psi$ enumerates $\Min(J_F)$ by increasing degree, this means that $\delta_i(\alpha)\in\ideal{\psi([\ell])}$.
Then there is a $j\in[\ell]$ such that
\[
\alpha-\be_i=\delta_i(\alpha)\ge\psi(j)=:\gamma.
\]
Using that 
\[
[\alpha]\ni k\not\in[\ell]=[F']\supseteq[\delta(F')]=[\psi([\ell])]\supseteq[\gamma]\implies\alpha_k>\gamma_k,
\]
this leads to the contradiction
\[
\Min(J_F)\ni\beta>\beta-\be_i=\alpha-\be_i-\be_{k}\ge\gamma\in J_F.
\]
It follows that all $\abs{[\alpha]}$ elements of $\delta(\alpha)\subseteq\Min(J_F)\setminus\psi([\ell])$ are pairwise different new minimal generators of $J_F$ of minimal degree $\abs\alpha-1=\abs\beta$.
In particular, $\psi$ extends to $[\ell]\cup[\alpha]$.
Suppose that $i\in[\ell]\cap[\alpha]$, and hence,
\[
\rk(\psi)=\abs{[\ell]\cup[\alpha]}<\ell+\abs{[\alpha]}\le\mu(J_F)=\rk\M_F.
\]
An extension of $\psi$ yields a $j\not\in[\ell]\cup[\alpha]$ and an $\alpha'\in F$ such that $\delta_i(\alpha)=\delta_j(\alpha')$.
Replace $\alpha$ by $\alpha'$ repeatedly to decrease $\abs{\alpha\vert_{[\ell]}}$ until $[\ell]\cap[\alpha]=\emptyset$.
Then reorder $\ell+1,\dots,n$ such that $[\ell]\cup[\alpha]=[\ell+\abs{[\alpha]}]$.
Now including $\alpha$ in $F'$ increases $\ell$ by $\abs{[\alpha]}$.
Iterating this procedure until $\ell=\mu(J_F)$ yields the claim.
\end{proof}

\section{Mather--Yau under strong Euler homogeneity}

In this section, we discuss a Mather--Yau-type theorem for strongly Euler homogeneous power series.
We first recall the definition (see \cite[p.~769]{GS06}).


\begin{dfn}\label{25}
A power series $f\in\O_Y=\Cx$ is called \emph{(strongly) Euler homogeneous} if $f\in\J_f$ ($f\in\mm_Y\J_f$ where $\mm_Y=\ideal\bx$).
In this case, a hypersurface singularity $X=V(f)\subseteq Y$ is called \emph{(strongly) Euler homogeneous}.
\end{dfn}


\begin{rmk}\label{21}\
\begin{enumerate}[(a)]

\item\label{21a} Euler homogeneity of $f$ is equivalent to $\J_f=\tilde\J_f$.

\item\label{21b} If $X=V(f)$ is strongly Euler homogeneous, then so is $f$.
That is, strong Euler homogeneity is invariant under contact equivalence.

\item\label{21c} If $X\cong X'\times(\CC,0)$, then $X$ is strongly Euler homogeneous if and only if $X'$ is so (see \cite[Lem.~3.2]{GS06}).

\item\label{21d} For $X$ with trivial $\Iso X$ Euler homogeneity must be strong (see \cite[Thm.~1.(4')]{HM86}).
Note also that any $X$ with non-trivial $\Iso X$ is already Euler homogeneous.
Indeed, for $f$ independent of $x_n$, $X=V(\exp(x_n)f)$ and $\exp(x_n)f=\frac{\partial\exp(x_n)f}{\partial x_n}\in\J_{\exp(x_n)f}$.

\end{enumerate}
\end{rmk}


Euler homogeneity and Theorem~\ref{5} are linked by the following

\begin{rmk}\label{24}
Suppose that the hypersurface singularity $X=V(f)\subseteq Y$ is not smooth, that is, $\mathbf0\in\Sing X$.
Then there is a modified singular locus
\[
\Sing^*X:=V(TK(f)),\quad TK(f):=\mm_Y\J_f+\ideal f\unlhd\O_Y,
\]
defined by the tangent space $TK(f)$ at $f$ to the orbit of $f$ under the contact group.
The underlying reduced space germs of $\Sing X$ and $\Sing^*X$ agree.
Note that $TK(f)=\mm_Y\tilde\J_f$ if $X$ is strongly Euler homogeneous.

Thus, for strongly Euler homogeneous hypersurface singularities $X$ and $X'$, $\Sing X\cong\Sing X'$ implies $\Sing^*X\cong\Sing^*X'$.
By Gaffney and Hauser (see \cite[Part~I]{GH85}), this further implies that $X\cong X$ if $\dim X=\dim X'$.

However, Euler homogeneity is not a consequence of harmonicity.
In fact, isolated hypersurface singularities are trivially harmonic.
Correspondingly, Euler homogeneity does not suffice for the conclusion of Theorem~\ref{5} due to Remark~\ref{21}.\eqref{21d} and an example of Gaffney and Hauser (see \cite[\S4]{GH85}).
\end{rmk}


Strongly Euler homogeneous power series satisfy a Mather--Yau Theorem for right equivalence (see \cite[Thm.~9.1.10]{dJP00}).

\begin{thm}\label{4}
Let $f,f'\in\Cx$ be strongly Euler homogeneous power series.
Then the following statements are equivalent.
\begin{enumerate}[(a)]
\item\label{4a} The power series $f$ and $f'$ are right equivalent.
\item\label{4b} The power series $f$ and $f'$ are contact equivalent.
\item\label{4c} The $\CC$-algebras $\Cx/\J_f$ and $\Cx/\J_{f'}$ are isomorphic.
\end{enumerate}
\end{thm}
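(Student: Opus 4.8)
The plan is to prove the cycle of implications (a)\,$\Rightarrow$\,(b)\,$\Rightarrow$\,(c)\,$\Rightarrow$\,(a). The first is immediate, since right equivalence is by definition contact equivalence with unit $u=1$. The conceptual engine for the remaining two is Remark~\ref{21}.\eqref{21a}: strong Euler homogeneity (hence Euler homogeneity) collapses the Milnor ideal $\J_f$ and the extended Jacobian ideal $\tilde\J_f$ into a single ideal. Since $\tilde\J_f$ is a contact invariant by Remark~\ref{32}, this promotes the Milnor algebra $\Cx/\J_f$ to a contact invariant as well, which is exactly what is needed to link the purely algebraic condition (c) with the geometric conditions (a) and (b).

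For (b)\,$\Rightarrow$\,(c), I would write $\varphi(f)=u\cdot f'$ with $\varphi\in\Aut_\CC\Cx$ and $u\in\Cx^*$. By Remark~\ref{32}, $\varphi$ induces an isomorphism $\Cx/\tilde\J_f\xrightarrow{\sim}\Cx/\tilde\J_{\varphi(f)}$, and $\tilde\J_{\varphi(f)}=\tilde\J_{u\cdot f'}=\tilde\J_{f'}$ by the contact invariance of the extended Jacobian ideal. Rewriting $\tilde\J_f=\J_f$ and $\tilde\J_{f'}=\J_{f'}$ via Remark~\ref{21}.\eqref{21a} turns this into the desired isomorphism $\Cx/\J_f\cong\Cx/\J_{f'}$.

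The substance is the hard direction (c)\,$\Rightarrow$\,(a). By Euler homogeneity the hypothesis is an isomorphism of Tjurina algebras $\Cx/\tilde\J_f\cong\Cx/\tilde\J_{f'}$. First I would lift it, along the two projections, to an automorphism $\varphi\in\Aut_\CC\Cx$ with $\varphi(\tilde\J_f)=\tilde\J_{f'}$, using that $\Cx$ is formally smooth over $\CC$: lift the images of the coordinates to obtain a $\CC$-algebra endomorphism inducing the given map, then check that it is bijective on $\mm_Y/\mm_Y^2$. Replacing $f$ by $\varphi(f)$, which is right equivalent to $f$ and again strongly Euler homogeneous by Remark~\ref{21}.\eqref{21b}, reduces everything to the case of a single shared ideal $\J_f=\tilde\J_f=\tilde\J_{f'}=\J_{f'}=:\J$. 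I would then join $f$ to $f'$ by the affine homotopy $f_t:=(1-t)f+tf'$ and run a Moser-type argument, seeking local coordinate changes $\varphi_t$ with $\varphi_t(f)=f_t$ generated by a time-dependent vector field $\xi_t$; the infinitesimal condition reduces to the homological equation $\xi_t(f_t)=f'-f$. Strong Euler homogeneity is decisive here: since $f,f'\in\mm_Y\J$, one expects the right-hand side to lie in $\mm_Y\J_{f_t}$, so that the equation is solvable with $\xi_t\in\mm_Y\Der_\CC\Cx$ vanishing at $\mathbf0$. Such $\xi_t$ integrate to genuine local coordinate changes fixing the origin, and no conformal factor intervenes, yielding right rather than merely contact equivalence.

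The main obstacle is precisely the control of the Jacobian ideals $\J_{f_t}$ along the homotopy. The inclusion $\J_{f_t}\subseteq\J$ is automatic from $\partial_i f_t=(1-t)\partial_i f+t\partial_i f'$, but solving the homological equation requires the uniform membership $f'-f\in\mm_Y\J_{f_t}$, i.e.\ that the path $f_t$ stays inside a single contact orbit, together with convergence (not merely formal solvability) of the integration. In the possibly non-isolated setting this is the technical heart of the statement, and it is where I would appeal to the Mather--Yau-type theorem of de Jong and Pfister to supply the required finite-determinacy and integrability input.
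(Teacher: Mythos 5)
Your implications (a)$\Rightarrow$(b)$\Rightarrow$(c) coincide with the paper's: the first is definitional, the second combines contact invariance of $\tilde\J$ with $\J=\tilde\J$ under Euler homogeneity. For (c)$\Rightarrow$(a) you also take the paper's route --- reduce to a single ideal $\J_f=\J_{f'}=:\J$, join $f$ to $f'$ by the homotopy $H=f+t(f'-f)$, and integrate a vector field solving $\xi_t(f_t)=f'-f$ --- but you stop precisely where the work lies. You correctly isolate the needed input, namely that $f'-f\in\ideal{\bx}\J_{f_t}$ along the path (equivalently, that the relative Jacobian ideal does not drop), and then defer it to ``the Mather--Yau-type theorem of de Jong and Pfister'' as a source of ``finite-determinacy and integrability input.'' That is not a proof of the step: \cite[Thm.~9.1.10]{dJP00} \emph{is} the implication being proved, not a determinacy lemma, and if you invoke it you should do so directly on the induced isomorphism of $\CC\{f\}$-algebras $\Cx/\ideal{\bx}\J_f\cong\Cx/\ideal{\bx}\J_{f'}$ (which is exactly the paper's one-line citation before its self-contained argument), rather than bury it inside a Moser scheme whose key membership you have not established.

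The paper fills the gap as follows: from $\frac{\partial f}{\partial x_i}\equiv t(\frac{\partial f'}{\partial x_i}-\frac{\partial f}{\partial x_i})\bmod\J_H$ one gets $\J/\J_H=t(\J/\J_H)$, so Nakayama yields $\J_H=\J$ at $(\mathbf0,0)$ and, by symmetry, at $(\mathbf0,1)$; coherence of $\J/\J_H$ then makes the set $D$ of parameters $\tau$ with $\J_{f_\tau}\ne\J$ at $\mathbf0$ a discrete subset of $\CC\setminus\set{0,1}$, and at each $\tau\notin D$ strong Euler homogeneity gives $f'-f\in\ideal{\bx}\J=\ideal{\bx}\J_H$, so local triviality applies. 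One must then connect $0$ to $1$ by a path in $\CC\setminus D$: your affine homotopy over the real segment $[0,1]$ may pass through points of $D$, where the homological equation is unsolvable, so the complex parameter and the detour around the bad set are not cosmetic but essential. A smaller point: lifting the abstract isomorphism in (c) to an automorphism of $\Cx$ that is ``bijective on $\mm_Y/\mm_Y^2$'' is not automatic when $\J_f\not\subseteq\mm_Y^2$ (e.g.\ $f=x^2$), so the lift must be chosen with some care --- though the paper is equally terse on this reduction.
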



\begin{proof}
\eqref{4a} implies \eqref{4b} by Definition~\ref{31}, \eqref{4b} implies \eqref{4c} due to Remarks~\ref{32} and \ref{21}.\eqref{21a}.
It remains to show that \eqref{4c} implies \eqref{4a}.

An isomorphism $\Cx/\ideal\bx\J_f\cong\Cx/\ideal\bx\J_{f'}$ induced by \eqref{4c} is trivially one of algebras over $\CC\{f\}\cong\CC\{f'\}$ because the respective classes of $f$ and $f'$ are zero.
This implies \eqref{4a} (see \cite[Thm.~9.1.10]{dJP00}).
For the sake of self-containedness, we prove this latter implication:

By Remark~\ref{32}, we may assume that 
\[
\J_f=\J_{f'}=:\J
\]
and consider this as an equality of ideal sheaves on some common domain of convergence $\mathbf0\in U\subseteq Y$.
Consider the homotopy from $f$ towards $f'$
\[
H:=f+t\cdot(f'-f)\in\O_{U\times\CC}
\]
depending on a parameter $t\in\CC$ with its relative Jacobian ideal sheaf
\[
\J_H:=\ideal{\frac{\partial H}{\partial x_1},\dots,\frac{\partial H}{\partial x_n}}\unlhd\O_{U\times\CC}.
\]
By abuse of notation, we consider $\J\unlhd\O_{U\times\CC}$.
Then $\J_H\subseteq\J$ and 
\[
\frac{\partial f}{\partial x_i}\equiv t\cdot(\frac{\partial f'}{\partial x_i}-\frac{\partial f}{\partial x_i})\in t\J\mod\J_H\text{ for all }i=1,\dots,n.
\]
Thus, $\J/\J_H=t(\J/\J_H)$, and Nakayama's Lemma yields equality
\begin{equation}\label{12}
\J_H=\J\unlhd\O_{U\times\CC,(\mathbf0,0)}=\CC\{\bx,t\}.
\end{equation}
In other words, the support $Z$ of the quotient sheaf $\J/\J_H$ does not contain $(\mathbf0,0)$, and by symmetry not $(\mathbf0,1)$ either.
Due to coherence of the sheaf, $Z\subseteq U\times\CC$ is an analytic subset (see \cite[Cor.~6.2.9]{dJP00}), and $Z\cap(\set0\times\CC)$ identifies with a discrete subset $D\subseteq\CC\setminus\set{0,1}$ (see \cite[Thm.~3.1.10]{dJP00}).
For any $\tau\in\CC\setminus D$, strong Euler homogeneity of $f$ and $f'$ yields that
\[
\frac{\partial H}{\partial(t-\tau)}=f'-f\in\ideal\bx\J=\ideal\bx\J_H\unlhd\O_{U\times\CC,(\mathbf0,\tau)}=\CC\{\bx,t-\tau\}.
\]
By local triviality (see \cite[Cor.~9.1.6]{dJP00}), it follows that 
\[
f_\tau:=H(\bx,\tau)\in\O_Y=\CC\{\bx\}
\]
has locally constant right equivalence class for $\tau\in\CC\setminus D$.
Pick a continuous path $\gamma\colon[0,1]\to\CC\setminus D$ from $\gamma(0)=0$ to $\gamma(1)=1$.
By compactness, the right equivalence class of $f_\tau$ is then constant for $\tau\in\gamma([0,1])$.
In particular, $f=f_{\gamma(0)}$ and $f'=f_{\gamma(1)}$ are right equivalent, and hence, $X\cong X'$ as claimed.
\end{proof}

\section{Quasihomogeneous Jacobian algebras}

In this section, we deduce strong Euler homogeneity for a hypersurface singularity from a positive \emph{analytic} grading on the singular locus, generalizing a result of Xu and Yau (see \cite[Thm.~1.2]{XY96}).


The geometric meaning of strong Euler homogeneity is rather subtle in general.
It becomes more transparent in the following special case.


\begin{dfn}\label{11}
An \emph{Euler derivation} on a space germ $X$ is a $\CC$-linear derivation $\chi\colon\O_X\to\O_X$ of the form 
\begin{equation}\label{37}
\chi=\sum_{i=1}^nw_ix_i\frac\partial{\partial x_i},\quad \bw=w_1,\dots,w_n\in\QQ_{>0},
\end{equation}
where $\bx=x_1,\dots,x_n$ minimally generates $\mm_X$.
If it exists, then $X$ and all eigenvectors of $\chi$ are called \emph{quasihomogeneous}.
\end{dfn}


\begin{rmk}\label{41}
If $X=V(\I_{X/Y})\subseteq Y$ with $Y$ smooth and $\dim Y=\edim X$, then $\bx$ are coordinates on $Y$ and $\chi$ lifts to a $\CC$-linear derivation $\chi\colon\O_Y\to\O_Y$ with $\chi(\I_{X/Y})\subseteq\I_{X/Y}$ (see \cite[(2.1)]{SW73}).
Conversely, any such \emph{logarithmic Euler derivation} long $\I_{X/Y}$ induces an Euler derivation on $X$.
\end{rmk}


\begin{rmk}\label{56}
Any Thom--Sebastiani polynomial $f=\sum_{i=1}^k\bx^{\alpha_i}\in\O_Y$ is quasihomogeneous.
In fact, setting 
\[
w_i:=\begin{cases}
\frac1{\abs{\alpha_j}},& i\in[\alpha_j],\\
1,& i\not\in\bigsqcup_{j=1}^k[\alpha_j],
\end{cases}
\]
in \eqref{37} yields an Euler derivation $\chi$ on $Y$ such that $\chi(f)=f$.
\end{rmk}


\begin{rmk}\label{36}
Any quasihomogeneous $f\in\mm_Y$ is (strongly) Euler homogeneous since $\QQ_{>0}\cdot f\ni\chi(f)\in\mm_Y\J_f$ if $f\ne0$.
The converse holds true for isolated hypersurface singularities due to a result of K.~Saito (see \cite{Sai71}).
\end{rmk}


\begin{rmk}\label{35}
In more intrinsic terms, quasihomogeneity of $X$ means that $\mm_X$ is generated by eigenvectors $\bx=x_1,\dots,x_n$ of $\chi$ with eigenvalues $\bw\in\QQ_{>0}^n$.
By clearing denominators such that $\bw\in\ZZ_{>0}^n$, this becomes equivalent to a \emph{positive analytic grading} over $(\ZZ,+)$ in the sense of Scheja and Wiebe on the analytic algebra $(\O,\mm):=(\O_X,\mm_X)$, whose $k$th homogeneous part is the $k$-eigenspace of $\chi$, 
\[
\O_k=\ideal{\bx^\alpha\mid\ideal{\bw,\alpha}=k}_\CC,\quad
\ideal{\bw,\alpha}:=\sum_{i=1}^nw_i\alpha_i,
\]
spanned by monomials of $\bw$-weighted degree $k$ (see \cite[\S1-3]{SW73}).
This means that the vector spaces $\O_k\subseteq\O$, $k\in\ZZ$, induce a grading of $\O/\mm^\ell$, for all $\ell\in\ZZ_{\ge0}$, compatible with the canonical surjections $\O/\mm^\ell\to\O/\mm^{\ell}$, for all $\ell\ge\ell'$.
Note, however, that $\O\neq\bigoplus_{l\in\ZZ}\O_k$ in general.
\end{rmk}


Using Remark~\ref{35}, we can generalize a result of Xu and Yau (see \cite[Thm.~1.2]{XY96}) as follows.


\begin{thm}\label{3}
If a hypersurface singularity $X=V(f)$ has quasihomogeneous singular locus $\Sing X$, then $f$ is strongly Euler homogeneous.
\end{thm}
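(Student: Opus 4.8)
The plan is to extract from the quasihomogeneity of $\Sing X$ a logarithmic Euler derivation $\chi$ on the ambient smooth germ $Y$ and then show that $\chi(f)$ is a unit multiple of $f$, which forces strong Euler homogeneity. By Definition~\ref{11} and Remark~\ref{41}, the positive analytic grading on $\O_{\Sing X}=\Cx/\tilde\J_f$ is induced by an Euler derivation $\chi=\sum_i w_i x_i\frac{\partial}{\partial x_i}$ with $\bw\in\QQ_{>0}^n$, which lifts to a derivation on $\O_Y=\Cx$ preserving $\tilde\J_f=\ideal{\frac{\partial f}{\partial x_1},\dots,\frac{\partial f}{\partial x_n},f}$. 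The key observation is that the chain rule gives
\[
\frac{\partial\chi(f)}{\partial x_i}=\chi\Bigl(\frac{\partial f}{\partial x_i}\Bigr)+w_i\frac{\partial f}{\partial x_i},
\]
so since $\chi$ preserves $\J_f$ we get $\chi(f)\in\J_f$ up to checking the constant of integration; more importantly this lets me control $\chi(f)$ modulo $\J_f$. The goal is to prove $f\in\mm_Y\J_f$.

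First I would use the grading of Remark~\ref{35} to argue that $f$ is itself quasihomogeneous for $\chi$, i.e.\ $\chi(f)=\lambda f$ for some eigenvalue $\lambda$. The point is that $f$ generates $\ideal f=\ann_{\O_Y}(\O_X)$ intrinsically (it is the conductor-type ideal defining $X$ inside $Y$), and $\tilde\J_f$ depends only on $\ideal f$ by Remark~\ref{32}; so the grading on $\O_{\Sing X}$ should force the single generator $f$ into a homogeneous component. Concretely, I would show that the line $\CC\cdot f\subseteq\O_Y$ is preserved by the lifted $\chi$: since $\chi$ preserves $\tilde\J_f$ and $f$ is (up to unit) the distinguished generator of the defining ideal of the hypersurface, $\chi(f)$ must again be a multiple of $f$ modulo higher-order corrections coming from the fact that the grading is only on $\O/\mm^\ell$ and $\O\ne\bigoplus_k\O_k$ in general.

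Once $\chi(f)=\lambda f$ with $\lambda>0$ is established, strong Euler homogeneity is immediate exactly as in Remark~\ref{36}: we have
\[
f=\tfrac1\lambda\chi(f)=\tfrac1\lambda\sum_{i=1}^n w_i x_i\frac{\partial f}{\partial x_i}\in\mm_Y\J_f,
\]
because each $x_i\frac{\partial f}{\partial x_i}\in\mm_Y\J_f$. This is the clean endgame; all the work is in producing the eigenvalue equation $\chi(f)=\lambda f$ with $\lambda\neq 0$.

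The main obstacle is exactly the passage from \emph{$\chi$ preserves the ideal $\tilde\J_f$} to \emph{$f$ is an eigenvector of $\chi$}. The delicate point, flagged in Remark~\ref{35}, is that the Scheja--Wiebe analytic grading lives on the filtered quotients $\O/\mm^\ell$ rather than on $\O$ itself, so I cannot naively decompose $f$ into a finite sum of homogeneous pieces and read off that it is a single eigenvector; I must instead work order by order in the $\mm_Y$-adic filtration and control convergence, verifying that the lift $\chi$ of the abstract Euler derivation can be chosen so that $f$ genuinely sits in one weight. This is presumably where the hypothesis that the grading is \emph{positive} (all $w_i>0$, so each weight space is finite-dimensional and the filtration is separated) becomes essential, ruling out the pathologies of the nonisolated case and recovering, for $\Sing X=\set{\mathbf0}$, precisely the Xu--Yau theorem \cite[Thm.~1.2]{XY96} that the statement generalizes.
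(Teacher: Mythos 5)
Your endgame is fine, but the pivot of your argument --- that the lifted Euler derivation $\chi$ has $f$ as an eigenvector, $\chi(f)=\lambda f$ with $\lambda\neq0$ --- is both unproved and, as stated, false. Take $f=(1+x)(x^2+y^2z)$: by Remark~\ref{32} one has $\tilde\J_f=\tilde\J_{x^2+y^2z}=\ideal{x,y^2,yz}$, so $\Sing X$ is quasihomogeneous (even normal crossing) and every positive diagonal derivation $\chi=w_1x\frac{\partial}{\partial x}+w_2y\frac{\partial}{\partial y}+w_3z\frac{\partial}{\partial z}$ preserves $\tilde\J_f$; yet $\chi(f)=2w_1x^2+(2w_2+w_3)y^2z+3w_1x^3+(w_1+2w_2+w_3)xy^2z$, and comparing the coefficients of $x^2$ and $x^3$ with those of $\lambda f$ forces $2w_1=\lambda=3w_1$, hence $w_1=0$, a contradiction. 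So preservation of $\tilde\J_f$ does not put $f$ into a single weight space; to achieve that you would have to change coordinates, i.e.\ prove that $f$ is genuinely quasihomogeneous --- a statement strictly stronger than the theorem's conclusion, which your sketch does not establish and which you yourself flag as ``the main obstacle'' without supplying an argument. The proof therefore has a hole exactly where all the work lies.

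The paper avoids the eigenvector claim entirely by splitting the problem in two. First it reduces to the case of trivial $\Iso X$ using Ephraim's splitting \eqref{34} together with Lemma~\ref{10} and the contact invariance of strong Euler homogeneity (Remark~\ref{21}.\eqref{21b}). In that case Remark~\ref{21}.\eqref{21d} (Hauser--M\"uller) says Euler homogeneity is automatically \emph{strong}, so it suffices to prove the weaker, coordinate-free statement $f\in\J_f$; this is exactly what the Xu--Yau argument gives, with Lemma~\ref{15} providing the integration of $\chi$ on the $\chi$-invariant ideal. Your proposal omits this reduction, and without it there is no route from $f\in\J_f$ to $f\in\mm_Y\J_f$ other than the eigenvalue equation, which fails. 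To repair your proof, either incorporate the reduction to trivial $\Iso X$ so that Euler homogeneity suffices, or give an honest construction of coordinates and a lift for which $\chi(f)=\lambda f$ --- the latter being substantially harder than the theorem itself.
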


\begin{proof}
To reduce to the case where $\Iso X$ is trivial in \eqref{34}, apply Lemma~\ref{10} to $\Sing X$ and use Remark~\ref{21}.\eqref{21b}.
Then, by Remark~\ref{21}.\eqref{21d}, it suffices to to show that $f$ is Euler homogeneous.
This follows from the argument of Xu and Yau (see \cite[Proof of Thm.~1.2]{XY96}) using Lemma~\ref{15} and a positive \emph{analytic} grading on $\O_{\Sing X}=\O_Y/\tilde\J_f$ (see Remark~\ref{35}).
\end{proof}


\begin{lem}\label{10}
If a Cartesian product $X=X'\times Z$ of space germs is quasihomogeneous and $Z$ is smooth, then $X'$ is quasihomogeneous.
\end{lem}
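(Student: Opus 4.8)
The plan is to produce a positive analytic grading on $\O_{X'}$, which by Remark~\ref{35} is equivalent to quasihomogeneity of $X'$. I would start from an Euler derivation $\chi=\sum_iw_ix_i\frac\partial{\partial x_i}$ on $X$ as in Definition~\ref{11}, where $\bx=x_1,\dots,x_N$ minimally generates $\mm_X$ and the weights lie in $\QQ_{>0}$. Each generator is an eigenvector, $\chi(x_i)=w_ix_i$, and more generally $\chi$ multiplies a monomial $\bx^\alpha$ by $\ideal{\bw,\alpha}$; this is precisely the positive analytic grading of $\O_X$ described in Remark~\ref{35}, for which the $x_i$ are homogeneous.

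The product structure yields a canonical splitting $\mm_X/\mm_X^2=V'\oplus W$, where $V'=\mm_{X'}/\mm_{X'}^2$ is the image of the inclusion $\O_{X'}\into\O_X$ induced by the projection $X\to X'$, and $W=\mm_Z/\mm_Z^2$ has dimension $\dim Z=:m$ since $Z$ is smooth. As the classes $\ol{x_i}$ form a basis of $\mm_X/\mm_X^2$, their images span the $m$-dimensional quotient $(\mm_X/\mm_X^2)/V'\cong W$, so I can extract $m$ of the generators, say $\tilde z_1,\dots,\tilde z_m$, whose classes form a basis of this quotient, i.e.\ a complement to $V'$. These $\tilde z_j$ are homogeneous eigenvectors of $\chi$, hence the ideal $\ideal{\tilde z_1,\dots,\tilde z_m}$ is homogeneous and $\chi$ descends to a derivation inducing a positive analytic grading on $\O_X/\ideal{\tilde z_1,\dots,\tilde z_m}$.

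It then remains to identify this quotient with $\O_{X'}$. The key point is that the $\tilde z_j$ serve as an alternative system of \emph{vertical coordinates} for the smooth projection $X\to X'$: writing $\O_X=\O_{X'}\{\bz\}$, one has $\tilde z_j=\sum_kc_{jk}z_k+(\text{higher order and }\mm_{X'}\text{-terms})$ with $(c_{jk})$ invertible, precisely because the $\ol{\tilde z_j}$ complement $V'$. Therefore the $\O_{X'}$-algebra homomorphism $\O_{X'}\{y_1,\dots,y_m\}\to\O_X$ sending $y_j\mapsto\tilde z_j$ is an isomorphism on relative cotangent spaces, and hence an isomorphism. Killing the $y_j$ gives $\O_X/\ideal{\tilde z_1,\dots,\tilde z_m}\cong\O_{X'}$, and transporting the grading along this isomorphism equips $\O_{X'}$ with a positive analytic grading, so $X'$ is quasihomogeneous.

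I expect the main obstacle to be this last identification, namely verifying that the homogeneous slice $V(\tilde z_1,\dots,\tilde z_m)$ recovers $X'$ and not some other transversal section. This rests on the inverse function theorem for analytic algebras (equivalently a graded Nakayama argument), together with the transversality arranged in the selection step; everything else reduces to extracting a basis from a spanning set in $(\mm_X/\mm_X^2)/V'$ and to the elementary fact that a quotient of a positively graded analytic algebra by a homogeneous ideal remains positively graded.
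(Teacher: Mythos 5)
Your argument is correct and is essentially the paper's proof written out in more detail: both cut $X=X'\times Z$ down by a subset of the eigenvector-generators of $\mm_X$ whose classes complement $\mm_{X'}/\mm_{X'}^2$, observe that the resulting ideal is $\chi$-invariant, and identify the slice with $X'$ via the implicit/inverse mapping theorem. The paper compresses your selection step and cotangent-space verification into ``reordering $\bx$'' plus a citation of the Implicit Mapping Theorem, so there is nothing to add.
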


\begin{proof}
Quasihomogeneity of $X$ yields an Euler derivation $\chi$ as in \eqref{37}.
By the Implicit Mapping Theorem (see \cite[Thm.~3.3.6]{dJP00}), reordering $\bx$ yields
\[
X'\cong X'':=V(x_1,\dots,x_k)\subseteq X,\quad k=\dim Z.
\]
The derivation induced by $\chi$ makes $X''$ and hence $X'$ quasihomogeneous.
\end{proof}


\begin{lem}\label{15}
Let $\O$ be an analytic algebra with maximal ideal $\mm$, and $\chi$ an Euler derivation as in \eqref{37}.
Then $\chi$ induces a $\CC$-linear automorphism on any $\chi$-invariant ideal $\I\subseteq\mm$.
\end{lem}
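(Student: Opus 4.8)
The plan is to verify that the restriction $\chi\colon\I\to\I$ is a bijective $\CC$-linear map, using that $\chi$ is diagonal with \emph{positive} eigenvalues. After clearing denominators so that $\bw\in\ZZ_{>0}^n$, Remark~\ref{35} equips $\O$ with the positive analytic grading whose degree-$k$ part $\O_k$ is spanned by the monomials $\bx^\alpha$ with $\ideal{\bw,\alpha}=k$, and on $\O_k$ the Euler derivation \eqref{37} acts as multiplication by $k$. Since every monomial occurring in $\mm$ satisfies $\ideal{\bw,\alpha}\ge c:=\min_i w_i>0$, the eigenvalues of $\chi$ on $\mm\supseteq\I$ are bounded below by $c$. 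The two points to establish are that $\I$ is compatible with this grading and that the evident degreewise inverse of $\chi$ stays both convergent and inside $\I$.

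For injectivity I would use uniqueness of the homogeneous decomposition supplied by the grading: writing $g=\sum_{k\ge1}g_k$ with $g_k\in\O_k$, one has $\chi(g)=\sum_{k\ge1}k\,g_k$, so $\chi(g)=0$ forces $k\,g_k=0$, hence $g_k=0$ for all $k$ and $g=0$. This shows $\chi$ is injective already on $\mm$, and a fortiori on $\I$.

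The substantial step is surjectivity $\chi(\I)=\I$. First I would show that $\I$ is graded. On each finite-dimensional quotient $\O/\mm^\ell$ the operator $\chi$ is semisimple with the weighted degrees as eigenvalues, so the $\chi$-invariant subspace $(\I+\mm^\ell)/\mm^\ell$ is the direct sum of its intersections with the eigenspaces, i.e.\ it is graded. Consequently every homogeneous component $h_k$ of an $h\in\I$ lies in $\I+\mm^\ell$ for all $\ell$, and hence in $\I$ because ideals of the analytic algebra $\O$ are $\mm$-adically closed by Krull's intersection theorem. With $\I$ graded I would define the preimage of $h=\sum_{k\ge1}h_k$ by
\[
g:=\sum_{k\ge1}\tfrac1k\,h_k.
\]
Each partial sum lies in $\I$; the series converges as an element of $\O$ because dividing the Taylor coefficients of a convergent series by $\ideal{\bw,\alpha}\ge c$ leaves them bounded on the same polydisc; and its tails lie in arbitrarily high powers of $\mm$, so $g\in\I+\mm^\ell$ for all $\ell$ and thus $g\in\I$ by closedness. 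Finally $\chi(g)=\sum_{k\ge1}k\cdot\tfrac1k\,h_k=h$, which proves surjectivity.

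The main obstacle is exactly the caveat $\O\ne\bigoplus_k\O_k$ recorded in Remark~\ref{35}: the diagonalization of $\chi$ genuinely lives only at the $\mm$-adic level, so surjectivity cannot be read off by a purely formal inversion. What makes it work is the interplay of one analytic and one algebraic feature: the positivity $\bw\in\QQ_{>0}^n$ yields the uniform lower bound $\ideal{\bw,\alpha}\ge c>0$ that guarantees convergence of the degreewise inverse, while $\mm$-adic closedness of the ideal guarantees that this inverse, together with the homogeneous components it is built from, actually belongs to $\I$. Injectivity and the invertibility on each single graded piece are formal; it is this combination of convergence and closedness that carries the surjectivity.
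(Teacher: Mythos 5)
Your proof is correct and follows essentially the same route as the paper's: invert $\chi$ degreewise via $h\mapsto\sum_k h_k/k$ and use the positive lower bound on the weighted degrees to get convergence on the same polydisc. The only difference is that you spell out why a $\chi$-invariant ideal is graded and why the degreewise inverse lands back in $\I$ (semisimplicity on the finite-dimensional quotients $\O/\mm^\ell$ plus $\mm$-adic closedness of ideals), steps the paper compresses into ``$\chi$-invariant and hence $\bw$-weighted homogeneous'' with a reference to Scheja--Wiebe.
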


\begin{proof}
By Remark~\ref{41}, clearing denominators of $\bw$ and reordering variables, we may assume that $\O=\CC\{\bx\}$ in the situation of Remark~\ref{35} with
\begin{equation}\label{38}
w_1\le\dots\le w_n
\implies
w_1\abs{\alpha}\le\ideal{\bw,\alpha}\le w_n\abs{\alpha}.
\end{equation}
Expanding an element 
\[
f=\sum_{k\in\ZZ_{>0}}f_k\in\mm=\ideal\bx,\quad f_k\in\O_k, 
\]
in terms of $\bw$-weighted homogeneous parts, one finds a unique preimage 
\[
\int_\chi f:=\sum_{k\in\ZZ_{>0}}\frac{f_k}k,\quad\chi(\int_\chi f)=f.
\]
Writing $f=\sum_{\abs\alpha>0}f_\alpha\bx^\alpha$ in terms of monomials, then using \eqref{38} we obtain 
\[
\norm{\int_\chi f}_\bt=\sum_{\abs\alpha>0}\frac{\abs{f_\alpha}}{\ideal{\bw,\alpha}}\bt^\alpha\le\frac1{w_1}\sum_{\abs\alpha>0}\frac{\abs{f_\alpha}}{\abs{\alpha}}\bt^\alpha\le\frac1{w_1}\sum_{\abs\alpha>0}\abs{f_\alpha}\bt^\alpha=\frac{\norm{f}_\bt}{w_1}<\infty
\]
for all $\bt\in\RR_{+}^n$, and hence, $\int_\chi f\in\mm$ (see \cite[\S1.2, Satz~3', \S3.3]{GR71}).

If $\I\subseteq\mm$ is a $\chi$-invariant and hence $\bw$-weighted homogeneous ideal, then $\int_\chi$ leaves all homogeneous parts $\I_k$, $k\in\ZZ_{>0}$, and hence $\I$ itself invariant.
\end{proof}

\section{Monomial Jacobian ideals}\label{9}

In this section, we combine Proposition~\ref{1} and Theorems~\ref{4} and \ref{3} to prove our main Theorem~\ref{6}.


We consider the following extremal variant of quasihomogeneity given by a maximal number of linearly independent weight vectors.

\begin{dfn}\label{16}
We call an ideal of an analytic algebra $\O$ \emph{monomial} if it is generated by monomials in terms of some minimal generators of the maximal ideal $\mm\lhd\O$.
A space germ $X\subseteq Y$ is \emph{normal crossing} if its defining ideal $\I_{X/Y}\unlhd\O_Y$ is monomial.
Note that such space germs are quasihomogeneous.
\end{dfn}


\begin{rmk}
In more intrinsic terms, maximal quasihomogeneity of $X$ means that $\Aut_\CC\O_X$ contains an algebraic torus of dimension $\edim X$ (see Remark~\ref{39}) as a subgroup in the sense of Hauser and M\"uller (see \cite[\S1)]{HM89}).
Indeed, such a torus acts linearly in terms of suitable coordinates and lifts to any smooth space germ $Y\supseteq X$ with $\dim Y=\edim X$ (see \cite[Satz~6.i)]{HM89}).
The dimension condition is redundant because a general such embedding is isomorphic to $X\times Z\subseteq Y\times Z$ with $Z$ smooth.
The torus invariant defining ideal $\I_{X/Y}\unlhd\O_Y$ is then generated by monomials.
The converse implication holds trivially.
\end{rmk}


We are ready to prove our main result.


\begin{proof}[Proof of Theorem~\ref{6}]
Sufficiency is due to Remark~\ref{32}.
Suppose that $\tilde\J_f$ is monomial for some $0\ne f\in\ideal\bx$.
Then $\Sing X=V(\tilde\J_f)$ is quasihomogeneous, and hence, $f$ is strongly Euler homogeneous by Theorem~\ref{3}.
By Remark~\ref{13}, the support $F:=\supp(f)$ of $f$ defines a Jacobian semigroup ideal $J_F$.
Then $F'$ obtained from Proposition~\ref{1} is the set of exponents of a Thom--Sebastiani polynomial 
\[
f':=\sum_{\alpha\in F'}\bx^\alpha\in\CC\{\bx\},\quad\J_f=\J_{f'},
\]
which is strongly Euler homogeneous by Remarks~\ref{56} and \ref{36}.
Then $f$ and $f'$ are right equivalent due to Theorem~\ref{4}, proving the claim.
\end{proof}

\section{Logarithmic derivations and holonomicity}\label{50}

In this section, we describe the logarithmic derivations along hypersurface singularities defined by Thom--Sebastiani polynomials and show that they define a finite logarithmic stratification in the sense of K.~Saito (see \cite{Sai80}). 


\begin{dfn}\label{49}
The $\O_X$-module of \emph{logarithmic derivations} along the hypersurface singularity $X=V(f)\subseteq Y$ (see Remark~\ref{41} and \cite[(1.4)]{Sai80}),
\[
\Der(-\log X)\subseteq\Der_\CC\O_Y=:\Theta_Y,
\]
consists of all $\CC$-linear derivations $\delta\colon\O_Y\to\O_Y$ with $\delta(f)\subseteq\ideal{f}$.

The \emph{logarithmic stratification} of $X$ on $Y$ by smooth connected immersed submanifolds is characterized by the fact that, for all $y\in Y$, the tangent space at $y$ of the stratum containing $y$ is spanned by the evaluations of all elements of $\Der(-\log X)$ at $p$ (see \cite[(3.3)]{Sai80}).
If this stratification is finite, then $X$ is called \emph{holonomic} (see \cite[(3.8)]{Sai80}). 
\end{dfn}


\begin{rmk}\label{52}\
\begin{enumerate}[(a)]

\item\label{52a} Replacing $f$ by its squarefree part, that is, $X$ by the associated reduced space germ $X^{\red}$, does not affect logarithmic derivations and stratification.

\item\label{52c} The complement $Y\setminus X$ and the connected/irreducible components of $X\setminus\Sing(X^{\red})$ are (finitely many) logarithmic strata (see \cite[(3.4)\,iii)]{Sai80}).

\item\label{52b} The derivations annihilating $f$ form an $\O_X$-submodule 
\[
\Der(-\log X)\supseteq\Der(-\log f):=\ann_{\Theta_Y}(f)\cong\syz(\J_f),
\]
isomorphic to the syzygy module of $\J_f$.
Euler homogeneity of $X=V(f)$ yields a logarithmic vector field $\chi$ such that $\chi(f)=f$.
If suitably chosen, this yields a direct sum decomposition
\[
\Der(-\log X)=\O_Y\cdot\chi\oplus\Der(-\log f).
\]

\end{enumerate}
\end{rmk}


\begin{rmk}\label{61}
Consider a monomial (Thom--Sebastiani polynomial) $f=\bx^\alpha$ defining a normal crossing divisor $X=V(f)$.
With the Euler derivation $\chi$ from Remark~\ref{56} one verifies that
\[
\ann_{\Theta_Y}(f)=\ideal{x_i\frac\partial{\partial x_i}-\alpha_i\cdot\chi\mid i\in[\alpha]}+\ideal{\frac\partial{\partial x_i}\mid i\in[n]\setminus[\alpha]}.
\]
Since $X^{\red}=\bigcup_{i\in[\alpha]}V(x_i)$ it follows with Remark~\ref{52}.\eqref{52b} that
\[
\Der(-\log X)\vert_{X^{\red}}=\ann_{\Theta_Y}(f)\vert_{X^{\red}}
=\ideal{x_i\frac\partial{\partial x_i},\frac\partial{\partial x_j}\mid i\in[\alpha],j\in[n]\setminus[\alpha]}\vert_{X^{\red}}.
\]
This shows that $\ann_{\Theta_Y}(f)$ defines the logarithmic stratification of $X$ on $X$, and that the strata are relative complements of coordinate subspaces.
In particular, $X$ is holonomic by Remark~\ref{52}.\eqref{52c}.
\end{rmk}


\begin{prp}\label{51}
Any Thom--Sebastiani polynomial defines a holonomic hypersurface singularity.
\end{prp}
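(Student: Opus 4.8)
The plan is to reduce the general Thom--Sebastiani case to the monomial case treated in Remark~\ref{61}, exploiting the disjoint-variable structure of such polynomials. Write $f=\sum_{j=1}^k\bx^{\alpha_j}$ with the supports $[\alpha_j]$ pairwise disjoint, and let $V_j:=[\alpha_j]$ together with $V_0:=[n]\setminus\bigsqcup_j V_j$ be the induced partition of the variables. First I would observe that the problem decouples along this partition: the variables in distinct blocks $V_j$ play independent roles, since $\partial f/\partial x_i$ depends only on $\bx^{\alpha_j}$ for the unique $j$ with $i\in V_j$. This suggests that $X=V(f)$ is, up to the natural factors coming from $V_0$, a Thom--Sebastiani (join) construction whose logarithmic stratification refines into a product of the stratifications of the individual monomial pieces.

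The key steps, in order, are as follows. First, record the Euler derivation $\chi$ from Remark~\ref{56} with weights $w_i=1/\abs{\alpha_j}$ for $i\in V_j$ and $w_i=1$ for $i\in V_0$, so that $\chi(f)=f$ and hence by Remark~\ref{52}.\eqref{52b} one has the decomposition $\Der(-\log X)=\O_Y\cdot\chi\oplus\Der(-\log f)$. Second, describe $\ann_{\Theta_Y}(f)=\syz(\J_f)$ explicitly. A derivation $\delta=\sum_i g_i\,\partial/\partial x_i$ annihilates $f$ iff $\sum_i g_i\,\partial f/\partial x_i=0$; because $\partial f/\partial x_i$ for $i\in V_j$ involves only the monomial $\bx^{\alpha_j}$, the annihilator splits as a direct sum over the blocks together with the free part $\bigoplus_{i\in V_0}\O_Y\,\partial/\partial x_i$. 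Within each block $V_j$ the relevant syzygies are exactly those of the single monomial $\bx^{\alpha_j}$, which are governed by Remark~\ref{61}. Third, restrict to $X^{\red}$ and compute, as in Remark~\ref{61}, that $\Der(-\log X)\vert_{X^{\red}}$ is spanned by the derivations $x_i\,\partial/\partial x_i$ for $i\in\bigsqcup_j V_j$ and $\partial/\partial x_i$ for $i\in V_0$. Fourth, conclude that the evaluation of these derivations at any point $y\in Y$ spans the tangent space of a stratum which is a relative complement of coordinate subspaces, so the logarithmic strata are indexed by subsets of the variables and are therefore finite in number; hence $X$ is holonomic by Remark~\ref{52}.\eqref{52c}, exactly parallel to Remark~\ref{61}.

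The main obstacle will be the transition from a single monomial to a sum of monomials in the computation of $\ann_{\Theta_Y}(f)$: for one monomial the Euler relations $x_i\,\partial/\partial x_i-\alpha_i\chi$ visibly annihilate $\bx^\alpha$, but for a genuine sum $\sum_j\bx^{\alpha_j}$ one must verify that no new cross-block syzygies appear and that the block-wise relations still annihilate the full $f$ rather than merely the individual summands. The disjointness of supports is precisely what makes this work, since $x_i\,\partial/\partial x_i$ acts on $\bx^{\alpha_j}$ by the scalar $(\alpha_j)_i$ and kills every other monomial, so each block's Euler-type relation annihilates $f$ after subtracting the common multiple of $\chi$; I expect the verification to reduce to checking that these derivations, together with the $\chi$ summand and the coordinate vector fields along $V_0$, already generate the full module of logarithmic derivations. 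Once this explicit generation is in hand, finiteness of the stratification is immediate, and the squarefreeness from Remark~\ref{33}.\eqref{33b} lets one pass between $X$ and $X^{\red}$ freely via Remark~\ref{52}.\eqref{52a}.
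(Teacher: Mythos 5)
Your reduction to the monomial case of Remark~\ref{61} is the right instinct and matches the paper's strategy, but two of your explicit claims are false, and they are exactly the points where the single-monomial computation fails to transfer to a genuine sum. First, cross-block syzygies \emph{do} appear: since the partial derivatives $\partial f/\partial x_i$, $i\in V_j$, and $\partial f/\partial x_{i'}$, $i'\in V_{j'}$, are monomials in disjoint variables, their lcm-syzygy is the Koszul relation $\frac{\partial f}{\partial x_{i'}}\frac{\partial}{\partial x_i}-\frac{\partial f}{\partial x_i}\frac{\partial}{\partial x_{i'}}$, which is a needed generator of $\syz(\J_f)$ lying in no single block. Second, the block-wise Euler relations of Remark~\ref{61} do \emph{not} annihilate $f$ when $k\ge2$: for $i\in V_j$ one has $(x_i\frac{\partial}{\partial x_i}-(\alpha_j)_i\chi)(f)=-(\alpha_j)_i\sum_{l\ne j}\bx^{\alpha_l}\ne0$. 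Concretely, for the Whitney umbrella $f=x^2+y^2z$ the derivation $x\frac{\partial}{\partial x}$ sends $f$ to $2x^2\notin\ideal{f}$, so it is not even logarithmic; hence your asserted description of $\Der(-\log X)\vert_{X^{\red}}$ as spanned by the $x_i\frac{\partial}{\partial x_i}$ and the $\frac{\partial}{\partial x_i}$, $i\in V_0$, is wrong, and with it the conclusion that all strata are relative complements of coordinate subspaces (the open stratum $X\setminus\Sing X$ of the Whitney umbrella is not of this form).

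The paper's proof repairs both points. It records that $\syz(\J_f)$ is generated by the block-wise syzygies $\syz(\J_{\bx^{\alpha_i}})$ \emph{together with} the Koszul relations, and then restricts not to $X^{\red}$ but to $\Sing X=\prod_i\Sing X_i$: there the Koszul relations vanish (their coefficients lie in $\J_f$), so only the block-wise data survives and the strata inside $\Sing X$ become products of the finitely many normal-crossing strata of the $X_i$ inside $\Sing X_i$. The strata outside $\Sing(X^{\red})$ are supplied for free by Remark~\ref{52}.\eqref{52c}. To fix your argument you would need to (i) add the Koszul relations to your list of generators of $\ann_{\Theta_Y}(f)$, (ii) abandon the claim that the block Euler relations annihilate $f$ and instead use the genuine annihilators of each single monomial $\bx^{\alpha_j}$ (which do kill $f$ by disjointness of supports), and (iii) carry out the evaluation of the resulting generators only on $\Sing X$, invoking Remark~\ref{52}.\eqref{52c} for everything outside the singular locus.
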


\begin{proof}
Let $f=\sum_{i=1}^k\bx^{\alpha_i}$ be a Thom--Sebastiani polynomial with support $F:=\set{\alpha_i\mid i\in[k]}$, defining a hypersurface singularity $X=V(f)\subseteq Y$.
In the case where $k=1$, the claim is due to Remark~\ref{61}.
The monomials of $f$ define (normal crossing) hypersurface singularities (see \eqref{55})
\[
X_i=V(\bx^{\alpha_i})\subseteq(\CC^{[\alpha_i]},\mathbf0)=:Y_i,\quad i=1,\dots,k,
\]
such that (see \eqref{57})
\[
X':=V(f)\subseteq\prod_{i=1}^k Y_i=:Y',\quad
X=X'\times Z\subseteq Y'\times Z=Y,\quad 
Z:=(\CC^{[n]\setminus[F]},\mathbf0),
\]
and all logarithmic strata of $X$ are products of strata of $X'$ with $Z$.
We may thus assume that $Z=\set{\mathbf0}$, that is, $[F]=[n]$.
Then
\[
\Sing X=\prod_{i=1}^k\Sing X_i.
\]
The Euler derivation from Remark~\ref{56} restricts to that in Remark~\ref{61} on each $Y_i$.
The syzygies $\syz(\J_f)$ are generated by all $\syz(\J_{\bx^{\alpha_i}})$ and the Koszul relations.
These latter vanish on $\Sing X$.
By Remarks~\ref{52}.\eqref{52b} and \ref{61}, it follows that the logarithmic strata of $X$ in $\Sing X$ are products of finitely many strata of the $X_i$ in $\Sing X_i$.
Thus, $X$ is holonomic by Remark~\ref{52}.\eqref{52c}.
\end{proof}

\printbibliography
\end{document}